\theoremstyle{definition} 
\newtheorem{thm}{Theorem} 
\newtheorem{ex}{Example} 
\newtheorem{defn}{Definition} 
\newtheorem{remark}[thm]{Remark}
\newtheorem{lem}[thm]{Lemma}
\title{Universal Coefficients Formula for the Residual Power Series Method with General Integral Transform}
\author{
 Pisamai Kittipoom \\
  School of Mathematics\\
  Prince of Songkla University\\
  Hat Yai Songkhla Thailand, 90110 \\
  \texttt{pisamai.k@psu.ac.th} \\
}
\begin{document}
\maketitle
\begin{abstract}
This paper introduces a novel approach to address inherent limitations in the Residual Power Series (RPS) method and its variants with Laplace-like transforms when applied to solving time-fractional differential equations. Existing methods, while successful, often require computationally expensive calculations for the coefficients of the series solution.
To overcome this limitation, we propose a new framework for the RPS method that utilizes a general integral transform. This framework incorporates an explicit formula for calculating coefficients, thereby eliminating repetitive computations and streamlining the solution process. Moreover, it offers a universally applicable approach, remaining compatible with various RPS methods that employ Laplace-like transform variants.
\end{abstract}

\section{Introduction} 

Fractional differential equations (FDEs) have emerged as a powerful tool for describing various phenomena in physics, engineering, and other scientific disciplines. However, obtaining exact solutions for FDEs can be challenging. This necessitates the development of efficient and accurate methods for constructing approximate solutions.

One such technique is the Residual Power Series (RPS) method, introduced by Abu Arqub \cite{rpsm2014}. This method constructs approximate solutions as series expansions with fractional powers. At each step, the RPS method determines the coefficients of the series by solving an algebraic equation derived from the fractional derivative of the residual function. 
Shortly thereafter, Eriat et al.  \cite{lpsm2020New} developed a novel method, the Laplace Residual Power Series (LRPS) method. This approach combines the Laplace transform with the RPS method, resulting in a series solution constructed in the Laplace domain. Unlike the RPS method, the LRPS method utilizes the limit of the residual function in the Laplace domain to compute the coefficients.

The recent work by Jafari \cite{gtran2021} introduced a new general integral transform that encompasses various Laplace-like transforms, including the Aboodh transform, the Elzaki transform, the Kamal transform, the Pourreza transform, the G-transform, and the Sumudu transform. This broader framework provided a unifying concept, allowing researchers to explore the potential of the RPS method with a wider range of transforms. 
This advancement has paved the way for the integration of these transforms with the RPS framework. 
Specific techniques, including the Elzaki RPS method  \cite{elzrps-2021,elzrps-2021Hi,elzrps-2023,elzrps-2024N,elzrps-2024}, the Aboodh RPS method \cite{abrps2022,BSabrps2023,abrps2024,abrps2024KDV}, and the Sumudu RPS method \cite{sumurps2022}, combine the strengths of the traditional RPS approach with specific Laplace-like transforms. This combination allows for efficient construction of solutions to these problems.
Furthermore, Khirsariya et al. \cite{robust2024} have extended the application of the new general integral transform by Jafari \cite{gtran2021} with the RPS method. This combined approach, named the General Residual Power Series Method (GRPSM), offers a powerful tool for solving time-fractional gas dynamic and drainage equations.  

Despite the success of traditional RPS method with various Laplace-like transforms in constructing series solutions for time-fractional differential equations, a significant challenge remains. The existing methods often involve computationally complex calculations of series solution coefficients at each step of the solution process. Furthermore, a critical question regarding the consistency of coefficients obtained through the RPS method with various types of Laplace-like transforms remains unanswered. Our work addresses this limitation by proposing a novel approach that bypasses the computational burden associated with previous RPS methods and their variants. We derive an explicit formula for calculating the coefficients, eliminating the need for repetitive residual function calculations during the solution process. This not only streamlines the solution process but also offers a universally applicable approach for RPS method with Laplace-like transform variants.

This paper is structured as follows. Section 2 establishes the groundwork by introducing the fundamental definitions and properties of fractional derivatives and the new general integral transform. Section 3 introduces the framework of our proposed generalized RPS method. Section 4 highlights the key advantage of our method: the straightforward calculation of coefficients using the derived formula. Finally, Section 5 demonstrates the effectiveness of our proposed approach through illustrative examples.

\section{Preliminaries}
\begin{defn}
The Caputo's time fractional derivative of order $\alpha$ of $\psi(x,t)$ is defined as
\[ D_t^{\alpha}\psi(x,t) = I_t^{n-\alpha}\partial_t^{n} \psi(x,t), \quad n-1 < \alpha \le n, x \in I, t > \tau > 0,\]
where $I_t^{\beta}$ is the time fractional Riemann-Liouville integral operator of order $\beta$ defined as
\[
 I_t^{\beta} \psi(x,t) = \begin{cases} \frac{1}{\Gamma(\beta)} \int_0^t (t-\tau)^{\beta-1} \psi(x, \tau)\,d\tau, &\quad \beta > 0, t > \tau \ge 0 \\ \psi(x,t),&\quad \beta = 0 
\end{cases}. 
\]
\end{defn}
In the following lemmas we review some properties of fractional derivative used throughout this paper. For more detail, we refer the reader to Ref \cite{book1, book2, book3, book4}.
\begin{lem} \label{lem-fd}
For $n-1< \alpha \le n, \gamma > -1$ and $t \ge 0$, we have
\begin{enumerate}[(1)]
\item $D^{\alpha} c = 0, \quad c \in \mathbb{R}$.
\item $D^{\alpha} t^{\gamma} = \frac{\Gamma(\gamma+1)}{\Gamma(\gamma+1-\alpha)}\,t^{\gamma-\alpha}$.
\item $D^{m\alpha} t^{k\alpha} = \begin{cases} 0,&\quad k<m \\ \Gamma(k\alpha+1),&\quad k=m \\ \frac{\Gamma(k\alpha+1)}{\Gamma((k-m)\alpha+1)} t^{(k-m)\alpha},& \quad k > m \end{cases}$,\\
where\quad $D_t^{m\alpha}=\underbrace{D_t^{\alpha}D_t^{\alpha}\dots D_t^{\alpha}}_{m\; \text{times}}.$
\end{enumerate}
\end{lem}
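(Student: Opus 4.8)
The plan is to prove the three identities in order, applying the definition $D_t^{\alpha}=I_t^{n-\alpha}\partial_t^{n}$ directly for (1) and (2) and then bootstrapping to (3) by iteration. Identity (1) is immediate: for a constant $c$ and any integer $n\ge 1$ one has $\partial_t^{n}c=0$, and the Riemann--Liouville integral of the zero function is again zero, so $D^{\alpha}c=I_t^{n-\alpha}0=0$.

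For (2) I would reduce the Caputo derivative to an ordinary derivative followed by a fractional integral. First compute $\partial_t^{n}t^{\gamma}=\frac{\Gamma(\gamma+1)}{\Gamma(\gamma+1-n)}t^{\gamma-n}$ and then apply $I_t^{n-\alpha}$ to obtain
\[
D^{\alpha}t^{\gamma}=\frac{\Gamma(\gamma+1)}{\Gamma(\gamma+1-n)}\cdot\frac{1}{\Gamma(n-\alpha)}\int_0^t (t-\tau)^{n-\alpha-1}\tau^{\gamma-n}\,d\tau .
\]
The substitution $\tau=ts$ converts the integral into $t^{\gamma-\alpha}B(n-\alpha,\gamma-n+1)$, where $B$ is the Euler Beta function. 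Rewriting $B(n-\alpha,\gamma-n+1)=\frac{\Gamma(n-\alpha)\Gamma(\gamma-n+1)}{\Gamma(\gamma-\alpha+1)}$, the factor $\Gamma(n-\alpha)$ cancels the prefactor $1/\Gamma(n-\alpha)$ and $\Gamma(\gamma-n+1)=\Gamma(\gamma+1-n)$ cancels the denominator inherited from the integer derivative, leaving exactly $\frac{\Gamma(\gamma+1)}{\Gamma(\gamma+1-\alpha)}t^{\gamma-\alpha}$.

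For (3) I would iterate (2), invoking (1) at the right moment. Writing $D^{m\alpha}=(D^{\alpha})^m$, each application of (2) to a power $t^{j\alpha}$ with $j\ge 1$ lowers the exponent by $\alpha$ and contributes a telescoping ratio of Gamma functions; after $\min(k,m)$ steps one reaches $\frac{\Gamma(k\alpha+1)}{\Gamma((k-\min(k,m))\alpha+1)}\,t^{(k-\min(k,m))\alpha}$. If $k>m$ this is the claimed expression. If $k=m$ the exponent has fallen to $0$ and, since $\Gamma(1)=1$, the value is the constant $\Gamma(k\alpha+1)$. If $k<m$ then after $k$ steps we sit at the constant $\Gamma(k\alpha+1)$ with $m-k\ge 1$ applications of $D^{\alpha}$ still to perform; here one must switch from (2) to (1), and the constant is annihilated, giving $0$. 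Note that every positive exponent met along the way is a multiple $j\alpha$ with $j\ge 1$, hence satisfies $j\alpha\ge\alpha>n-1$, so the Beta integral used in (2) always converges and no further regularity discussion is needed.

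The main obstacle is the interplay between (1) and (2) at a constant: applying the formula of (2) mechanically to $t^{0}$ would wrongly manufacture a nonzero $t^{-\alpha}$ term, because the cancellation $\Gamma(\gamma-n+1)/\Gamma(\gamma+1-n)=1$ silently fails when $\gamma+1-n$ is a non-positive integer (precisely where $\partial_t^{n}t^{\gamma}$ genuinely vanishes). The heart of the argument is therefore the bookkeeping that decides which rule governs each step---(2) while the exponent is strictly positive, and (1) the instant it reaches zero---which is exactly what produces the three-case structure of (3).
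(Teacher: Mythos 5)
Your proposal is correct, and for part (3) --- the only part the paper actually proves --- it follows essentially the same route as the paper: iterate part (2) on the exponents $k\alpha, (k-1)\alpha, \dots$, and the moment the exponent reaches zero invoke part (1) so that any remaining applications of $D^{\alpha}$ annihilate the constant. The paper phrases this as an induction on $m$ while you phrase it as a telescoping product of Gamma ratios, but the idea is identical. The genuine difference is that you prove parts (1) and (2) from the definition (integer derivative followed by the Riemann--Liouville integral, evaluated via the Beta function), whereas the paper simply cites textbook references for these, so your write-up is self-contained where the paper is not. Your closing remark is also a real gain in rigor: the power formula in (2) holds for the Caputo derivative only when the Beta integral converges, i.e.\ $\gamma > n-1$ (the stated hypothesis $\gamma > -1$ is the Riemann--Liouville condition and is too weak --- applying (2) blindly to $t^{0}$ would contradict (1), exactly the pitfall you flag); your observation that every exponent $j\alpha$ with $j \ge 1$ met during the iteration satisfies $j\alpha \ge \alpha > n-1$ is precisely the justification needed to make each step legitimate, a point the paper passes over silently.
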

\begin{proof}
The proof of part (1) and (2) are in Refs \cite{book1, book2, book3, book4}
\\
To prove part (3), if $k=m$, it follows from part (2) of the lemma that   $D^{k\alpha}t^{k\alpha} = \Gamma(k\alpha+1)$. 
For fixed $k \in \mathbb{N}$ and $k<m$, if $m=1$ then $D^{\alpha} t^0 = 0$. For $m=2$, $D^{2\alpha}t^{\alpha} = D^\alpha(D^\alpha t^{\alpha}) = D^{\alpha} (\Gamma(\alpha+1)) = 0$. By mathematical induction, assume that $D^{n\alpha}t^{k\alpha} = 0$ for $m=n$. Then
\[ D^{(n+1)\alpha} t^{k\alpha} = D\left( D^{n\alpha}t^{k\alpha} \right) = 0. \]
Hence, $D^{m\alpha} t^{k\alpha} = 0$ for all $k<m$. Finally, for the case $k>m$, we refer to part (2) of the lemma. 
\end{proof}
\begin{defn} (\cite{gtran2021,robust2024})
Let $f(t)$ be a integrable function defined for $t \ge 0$ and $|f(t)| \le Me^{\delta t}$ for some $\delta, M>0$. The general integral transform of $f(t)$, denoted by $T\{f(x)\}$ is defined by 
\[ T\{f(x)\}(s) = \nu(s) \int_0^{\infty} f(t) e^{-\omega(s)t}\,dt, \quad \omega(s)>\delta, \]
where $\nu(s) \ne 0$ and $\omega(s)$ are positive real functions.
\end{defn}
\begin{table}[ht!]
\centering
\begin{tabular}{lccl}
\hline 
\\[-0.8em]
Integral transform & $\omega(s)$ & $\nu(s)$ &  $\nu(s) \int_0^{\infty} f(t) e^{-\omega(s)t}\,dt$
\\[-0.8em]
\\ \hline 
\\[-0.9em]
Aboodh transform & $\frac{1}{s}$ & $1$ & $\frac{1}{s}\int_0^\infty f(t)e^{-st}\,dt$
\\[-0.9em]
\\  
Elzaki transform & $s$ & $\frac{1}{s}$ & $s\int_0^\infty f(t)e^{-\frac{t}{s}}\,dt$
\\[-0.9em]
\\  
Kamal transform & $1$ & $\frac{1}{s}$ & $\int_0^\infty f(t)e^{-\frac{t}{s}}\,dt$
\\[-0.9em]
\\
Laplace transform & $1$ & $s$ & $\int_0^\infty f(t)e^{-st}\,dt$
\\[-0.9em]
\\
$\alpha-$Laplace transform & $1$ & $s^{\frac{1}{\alpha}}$ & $\int_0^\infty f(t)e^{-\frac{t}{s^{\alpha}}}\,dt,\; \alpha \in \mathbb{R}^+$
\\[-0.8em]
\\ 
 G\_transform  & $s^\alpha$ & $\frac{1}{s}$ & $s^\alpha\int_0^\infty f(t)e^{-\frac{t}{s}}\,dt, \; \alpha \in \mathbb{Z}$
\\[-0.9em]
\\
Mohand transform & $s^2$ & $s$ & $ s^2\int_0^\infty f(t)e^{-st}\,dt$
\\[-0.8em]
\\ 
Pourreza transform & $s$ & $s^2$ & $s\int_0^\infty f(t)e^{-s^2t}\,dt$
\\[-0.9em]
\\ 
Sawi transform & $\frac{1}{s^2}$ & $\frac{1}{s}$ & $\frac{1}{s^2}\int_0^\infty f(t)e^{-\frac{t}{s}}\,dt$
\\[-0.9em]
\\ 
Sumudu transform & $\frac{1}{s}$ & $\frac{1}{s}$ & $\frac{1}{s} \int_0^\infty f(t)e^{-\frac{1}{s}}\,dt$
\\[-0.9em]
\\ \hline 
\end{tabular}
\caption{Laplace-like transforms}
\end{table}
\begin{lem} \cite{gtran2021,robust2024}\label{lem-prelim} 
 Let $f(x,t)$ be a piecewise continuous function on $I \times [0, \infty)$ and let $F(x,s)={T}\{ f(x,t) \}$ and $G(x,s)={T}\{ g(x,t) \}$ be the general transforms with respect to $t$ of $f$ and $g$, respectively. Then
\begin{enumerate}[(1)]
\item ${T}\{ c_1f(x,t)+c_2g(x,t)\} =  c_1F(x,s) + c_2 G(x,s).$
\item $T\{ t^{\alpha} \} = \dfrac{\Gamma(1+\alpha)\nu(s)}{\omega(s)^{1+\alpha}}, \quad \alpha > 0.$
\item $\lim\limits_{\omega(s) \to \infty} \dfrac{\omega(s)}{\nu(s)} F(x,s) = f(x,0), \quad x \in I$
\item ${T}_t \{ D_t^{n\alpha}f(x,t) \} = \omega(s)^{n\alpha} {T}_t\{f(x,t)\} - \nu(s)\sum\limits_{k=0}^{n-1}\omega(s)^{(n-k)\alpha-1}D_t^{k\alpha}\psi(x,0),\quad 0< \alpha <1$ \\
where $D_t^{n\alpha}=\underbrace{D_t^{\alpha}D_t^{\alpha}\dots D_t^{\alpha}}_{n\; \text{times}}.$
\end{enumerate}
\end{lem}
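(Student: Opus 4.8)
The plan is to prove Lemma \ref{lem-prelim}, which collects four properties of the general integral transform $T$. Each part follows from the integral definition $T\{f\}(s)=\nu(s)\int_0^\infty f(t)e^{-\omega(s)t}\,dt$ together with the known Laplace-transform properties, exploiting that $T$ is just the Laplace transform (evaluated at $\omega(s)$) scaled by $\nu(s)$.

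Let me think about each part and which is hardest.

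For part (1), linearity: this is immediate from linearity of the integral, since $T$ is an integral against a fixed kernel. I'd write $T\{c_1 f + c_2 g\} = \nu(s)\int_0^\infty (c_1 f + c_2 g)e^{-\omega(s)t}\,dt$ and split.

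For part (2): the formula for $T\{t^\alpha\}$. I'd compute $\int_0^\infty t^\alpha e^{-\omega(s)t}\,dt$ by the substitution $u=\omega(s)t$, yielding $\Gamma(1+\alpha)/\omega(s)^{1+\alpha}$, then multiply by $\nu(s)$.

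For part (3): the initial-value / inversion relation $\lim_{\omega(s)\to\infty}\frac{\omega(s)}{\nu(s)}F(x,s)=f(x,0)$. Here $\frac{\omega(s)}{\nu(s)}F = \omega(s)\int_0^\infty f(x,t)e^{-\omega(s)t}\,dt$, which is exactly $p\int_0^\infty f e^{-pt}\,dt$ with $p=\omega(s)$; this is the classical Laplace initial-value theorem $\lim_{p\to\infty}p\,\mathcal{L}\{f\}(p)=f(0^+)$.

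For part (4): the transform of the $n$-fold fractional derivative $D_t^{n\alpha}$. This is the substantive part and the main obstacle. The natural route is induction on $n$. Base case $n=1$: establish $T\{D_t^\alpha f\}=\omega(s)^\alpha T\{f\}-\nu(s)\,\omega(s)^{\alpha-1}f(x,0)$ for $0<\alpha<1$; this follows from the known single-order rule for the Laplace transform of a Caputo derivative together with the relation $T\{f\}=\nu(s)\mathcal{L}\{f\}(\omega(s))$, so I would reduce it to the Laplace case and substitute $p=\omega(s)$. For the inductive step, apply $D_t^{(n+1)\alpha}=D_t^\alpha(D_t^{n\alpha})$, use the base case on the outer $D_t^\alpha$ applied to $g:=D_t^{n\alpha}f$, then substitute the inductive hypothesis for $T\{g\}$ and the identity $g(x,0)=D_t^{n\alpha}f(x,0)$, and finally reindex the resulting sum so that it matches the stated closed form $\sum_{k=0}^{n}\omega(s)^{(n+1-k)\alpha-1}D_t^{k\alpha}\psi(x,0)$.

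The hard part is the bookkeeping in part (4): keeping the exponents $\omega(s)^{(n-k)\alpha-1}$ consistent through the reindexing and verifying that the boundary term split off by the base case correctly supplies the new $k=n$ summand while $\omega(s)^\alpha$ times the old sum supplies the $k=0,\dots,n-1$ summands with shifted exponents. A subtlety to handle carefully is the notational mismatch between $f$ and $\psi$ in the statement and the standing assumption $0<\alpha<1$ (so $D_t^\alpha$ is a single Caputo derivative with just one initial term); I would state explicitly that each $D_t^{k\alpha}\psi(x,0)$ is assumed to exist and be finite, which is what makes the telescoped sum well defined.
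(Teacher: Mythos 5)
The paper itself offers no proof of Lemma~\ref{lem-prelim}: it is stated as a quoted result from \cite{gtran2021,robust2024}, so there is no internal argument to compare against. Judged on its own, your proposal is correct, and it is the standard derivation one finds in those sources: the single identity $T\{f\}(s)=\nu(s)\,\mathcal{L}\{f\}(p)\big|_{p=\omega(s)}$ reduces (1)--(3) to linearity of the integral, the Gamma-integral computation, and the classical Laplace initial-value theorem, respectively. Your induction for part (4) also closes correctly: applying the base case to $g:=D_t^{n\alpha}f$ gives
\begin{align*}
T\{D_t^{(n+1)\alpha}f\}
&= \omega(s)^{\alpha}\,T\{D_t^{n\alpha}f\} - \nu(s)\,\omega(s)^{\alpha-1}D_t^{n\alpha}f(x,0)\\
&= \omega(s)^{(n+1)\alpha}T\{f\}
 - \nu(s)\sum_{k=0}^{n-1}\omega(s)^{(n+1-k)\alpha-1}D_t^{k\alpha}f(x,0)
 - \nu(s)\,\omega(s)^{\alpha-1}D_t^{n\alpha}f(x,0),
\end{align*}
and the final term is exactly the $k=n$ summand of $\sum_{k=0}^{n}\omega(s)^{(n+1-k)\alpha-1}D_t^{k\alpha}f(x,0)$, so the closed form propagates. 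Two points you flag are worth keeping explicit in a written-up version: the $\psi$ appearing on the right-hand side of part (4) is a typo for $f$ inherited from the paper, and a rigorous statement needs the intermediate derivatives $D_t^{k\alpha}f$, $k=1,\dots,n-1$, to be piecewise continuous and of exponential order so that their transforms exist and the base case may legitimately be applied to each of them in turn; the paper and its references leave this hypothesis implicit.
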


\section{Constructing the GRPS solution for time-fractional differential equations}
General Residue Power Series Method (GRPSM) combines the strengths of the general integral transform and the residual power series method to achieve an approximate or exact series solution for time-fractional differential equations. 
Consider the time-fractional partial differential equation of order $\alpha \in (n-1, n]$:
\begin{equation} \label{lr-1}
 D^{n\alpha}_t\psi(x,t) = \mathcal{N}[\psi(x,t)]+h(x,t) 
\end{equation} 
subject to initial conditions:
\begin{align} \label{ics}
  \psi(x,0)=f_0(x), D_t^{\alpha}\psi(x,0)=f_1(x), \dots, D_t^{(n-1)\alpha}\psi(x,0)=f_{n-1}(x),  
\end{align}  
where $\mathcal{N}[\psi(x,t)]$ is a nonlinear term of unknown function $\psi$ and its derivatives with respect to $x$ and $h(x,t)$ is a non-homogeneous function.  

In many studies, common forms of $\mathcal{N}[\psi(x,t)]$ include integer power functions or their derivatives with respect to the spatial variable $x$. Consequently, we will assume $\mathcal{N}[\psi(x,t)]$ represents an operator expressed as:  
\begin{align} \label{nonli}
\mathcal{N}[\psi(x,t)] &= \sum_{m=1}^M\sum_{n=0}^{N} c_{m}(x) \psi^{m}(x,t) D_x^{n} \psi(x,t) + h(x,t), 
\end{align} 
where $c_n(x)$ is a coefficient function of variable $x$ and $h(x,t)$ is a non-homogeneous function. 
\\
This method starts by expressing the solution to (\ref{lr-1}) as a fractional power series of the variable $t$ about the initial point $t=0$:
\begin{equation}\label{phi-1}
 \psi(x,t) = \sum_{k=0}^\infty \frac{\phi_k(x)}{\Gamma(k\alpha + 1)} t^{n\alpha}
\end{equation}
where $0< \alpha \le 1$ and $t > 0$.
\\
Suppose $|\psi(x,t)| < Me^{\delta t}$ for some $\delta, M > 0$.  By applying the general integral transform with respect to time $t$ to the fractional power series in Eq.(\ref{phi-1}), we obtain that
\begin{align}\label{phi-2}
\Psi(x,s) 
&= \sum_{k=0}^\infty \frac{\nu(s)}{\omega(s)^{k\alpha+1}}\phi_k(x),\quad \omega(s) > \delta, 
\end{align} 
where $\Psi(x,s) = T_t\{\psi(x,t)\}$.

In this method, the series solution (\ref{phi-2}) can be considered as an approximate solution to the exact solution which is obtained by utilizing the general integral transform to Eq.(\ref{lr-1}). It follow by part (4) of Lemma \ref{lem-prelim} that
\[
 \omega(s)^{n\alpha} \Psi(x,s) - \nu(s)\sum_{k=0}^{n-1}\omega(s)^{(n-k)\alpha-1} D_t^{k\alpha}\psi(x,0) = {T}_{t} \left\{ \mathcal{N}[\psi(x,t)] +  h(x,t) \right\} ,\quad \omega(s) > \delta.
\]
Therefore this result provides the exact solution to Eq.(\ref{lr-1}):
\begin{align} \label{LaEq}
\Psi(x,s) &= \sum_{k=0}^{n-1} \frac{\nu(s)}{\omega(s)^{k\alpha+1}} D_t^{k\alpha}\psi(x,0)  +\frac{1}{\omega(s)^{n\alpha}} {T}_{t} \left\{ \mathcal{N}[\psi(x,t)] + h(x,t)\right\} \nonumber 
\\
 &= \sum_{k=0}^{n-1} \frac{\nu(s)}{\omega(s)^{k\alpha+1}} f_{k}(x)  +\frac{1}{\omega(s)^{n\alpha}} {T}_{t} \left\{ \mathcal{N}[\psi(x,t)] + h(x,t)\right\}.
\end{align}
The following lemma presents the coefficient $\phi_n(x)$ in the series solutions (\ref{phi-1}) or (\ref{phi-2}). 

\begin{lem} (\cite{robust2024}) \label{lem-coef}
Let $\psi(x,t)$ be a continuous function in $I \times [0, \infty)$ and $|\psi(x,t)| \le Me^{\delta t}$ for some $\delta, M>0$. If $\Psi(x,s) = {T}_t \{ \psi(x,t)\}$ and
\[
\Psi(x,s) = \sum_{k=0}^\infty \frac{\nu(s)}{\omega(s)^{k\alpha +1}}\phi_k(x),\quad 0< \alpha \le 1, x \in I, \omega(s) > \delta, 
\]
then \quad $\phi_k(x) = D^{k\alpha}_t \psi(x,t)\,\Big|_{t=0}.$
\end{lem}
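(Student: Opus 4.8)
The plan is to argue by induction on $k$, extracting each coefficient $\phi_k(x)$ by combining the ``initial value'' limit in part (3) of Lemma \ref{lem-prelim} with the fractional-derivative transform rule in part (4). The guiding idea is that the given series is an expansion in descending powers of $\omega(s)$, so multiplying by $\omega(s)/\nu(s)$ and letting $\omega(s)\to\infty$ reads off the leading coefficient, while part (4) lets one ``shift'' the series so that successive coefficients become leading in turn.

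For the base case $k=0$, I would multiply the series for $\Psi(x,s)$ by $\omega(s)/\nu(s)$ to obtain
\[
\frac{\omega(s)}{\nu(s)}\Psi(x,s) = \phi_0(x) + \sum_{k=1}^\infty \frac{\phi_k(x)}{\omega(s)^{k\alpha}},
\]
and then let $\omega(s)\to\infty$: every term with $k\ge 1$ carries a strictly negative power of $\omega(s)$ and so tends to zero, leaving $\phi_0(x)$. On the other hand, part (3) identifies this same limit with $\psi(x,0)=D_t^{0\cdot\alpha}\psi(x,0)$, giving $\phi_0(x)=\psi(x,0)$.

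For the inductive step, assume $\phi_j(x)=D_t^{j\alpha}\psi(x,0)$ for every $j<k$. I would apply part (4) with $n=k$ to $\psi$, substitute the series representation of $\Psi(x,s)$ into the term $\omega(s)^{k\alpha}\Psi(x,s)$, and reindex. The point is that the terms coming from $i=0,\dots,k-1$ in the reindexed series take exactly the form $\nu(s)\sum_{i=0}^{k-1}\omega(s)^{(k-i)\alpha-1}\phi_i(x)$, which, after invoking the inductive hypothesis, cancels precisely the boundary sum $\nu(s)\sum_{j=0}^{k-1}\omega(s)^{(k-j)\alpha-1}D_t^{j\alpha}\psi(x,0)$ subtracted in part (4). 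What survives is
\[
\frac{\omega(s)}{\nu(s)}\,T_t\{D_t^{k\alpha}\psi(x,t)\} = \phi_k(x) + \sum_{i=k+1}^\infty \frac{\phi_i(x)}{\omega(s)^{(i-k)\alpha}}.
\]
Sending $\omega(s)\to\infty$ makes the tail vanish (each exponent $(i-k)\alpha$ is strictly positive), isolating $\phi_k(x)$; applying part (3) to $D_t^{k\alpha}\psi$ shows this limit also equals $D_t^{k\alpha}\psi(x,0)$, which closes the induction.

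The step I expect to be the main obstacle is the passage of the limit $\omega(s)\to\infty$ through the infinite tail $\sum_{i>k}\phi_i(x)\,\omega(s)^{-(i-k)\alpha}$: termwise vanishing of each summand does not by itself force the sum to vanish, so one needs uniform control of the tail. Here the standing hypothesis $|\psi(x,t)|\le Me^{\delta t}$, which guarantees that the transform exists and that the coefficient series converges for $\omega(s)>\delta$, is what I would use to dominate the tail and justify the interchange. A secondary, usually tacit, point is that part (3) is being applied to $D_t^{k\alpha}\psi$ rather than to $\psi$ itself, so one must assume each iterated derivative $D_t^{k\alpha}\psi$ inherits the piecewise continuity and exponential-order hypotheses required by that part of Lemma \ref{lem-prelim}.
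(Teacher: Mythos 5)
The paper itself contains no proof of this lemma: it is imported as a citation from \cite{robust2024}, so there is no in-paper argument to compare yours against. Judged on its own, your proposal is correct and is essentially the standard argument for this kind of ``initial-value reading'' of a transform-domain series; moreover it uses exactly the machinery this paper relies on elsewhere --- the combination of parts (3) and (4) of Lemma \ref{lem-prelim}, and the cancellation of the boundary sum in part (4) against the low-order terms of the series, is precisely the manipulation carried out in the paper's proof of Theorem \ref{thm-res}, so your proof is fully consistent with the paper's framework.

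Two refinements. First, on the obstacle you flag (passing the limit through the tail $\sum_{i>k}\phi_i(x)\,\omega(s)^{-(i-k)\alpha}$): the clean justification comes not directly from the bound $|\psi(x,t)|\le Me^{\delta t}$ but from the hypothesis that the series representation of $\Psi(x,s)$ holds (hence converges) for all $\omega(s)>\delta$. Fix any $s_0$ with $u:=\omega(s_0)>\delta$; since the terms of a convergent series are bounded and $\nu(s_0)\neq 0$, one gets $|\phi_i(x)|\le C(x)\,u^{i\alpha}$ for all $i$, and then for $\omega(s)>u$ the tail is dominated by $C(x)\,u^{k\alpha}\sum_{j\ge 1}\bigl(u/\omega(s)\bigr)^{j\alpha}$, a geometric series that tends to $0$ as $\omega(s)\to\infty$. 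This makes the interchange rigorous with no further appeal to the exponential-order bound. Second, your caveat that parts (3) and (4) must be applied to $D_t^{k\alpha}\psi$ (so each iterated Caputo derivative must itself be piecewise continuous and of exponential order) is a real, tacit hypothesis --- it is equally tacit in \cite{robust2024} and in this paper's own use of the lemma, so flagging it is appropriate rather than a defect. A last minor point: part (4) of Lemma \ref{lem-prelim} is stated for $0<\alpha<1$ while the lemma allows $\alpha=1$; for $\alpha=1$ the identity reduces to the classical transform rule for integer-order derivatives, so the induction still closes.
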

Lemma \ref{lem-coef} provides us only the first $n-1$ coefficients, related to the initial conditions (\ref{ics}): 
\[  
\phi_0(x) = \psi(x,0)=f_0(x),\; \phi_1(x) = D_t^{\alpha}\psi(x,0) = f_1(x), \dots, \phi_{n-1}(x) = D_t^{(n-1)\alpha}\psi(x,0) = f_{n-1}(x). 
\] 
Then Eq.(\ref{phi-2}) becomes
\begin{equation}\label{excHig}
\Psi(x,s) = \sum_{k=0}^{m-1} \frac{\nu(s)}{\omega(s)^{k\alpha +1}}f_k(x) + \sum_{k=m}^{\infty} \frac{\nu(s)}{\omega(s)^{k\alpha +1}}\phi_k(x).
\end{equation}

The GRPS method offers a systematic framework to determine the remaining coefficients $\phi_k(x), k=m, m+1, \dots$ within the series solution represented by Eq.(\ref{phi-2}).
\\
Let $\Psi_m(x,s)$, denoted as the $m-$approximate series solution, represent the $m-$th truncated series of $\Psi(x,s)$.
\begin{equation} \label{k-approx}
\Psi_m(x,s) = \sum_{k=0}^{n-1} \frac{\nu(s)}{\omega(s)^{k\alpha +1}}f_k(x) + \sum_{k=n}^{m} \frac{\nu(s)}{\omega(s)^{k\alpha +1}}\phi_k(x).
\end{equation} 
Analogous to the RPS method, we introduce the General residual function, denoted by $GRes(x,s)$ which quantifies the error between the approximate series solution (\ref{phi-2}) and the exact solution (\ref{excHig}) as follow:
\begin{equation} \label{resi-1}
  GRes(x,s) = \Psi(x,s) - \sum_{k=0}^{\infty} \frac{\nu(s)}{\omega(s)^{k\alpha+1}} f_{k}(x)  - \frac{1}{\omega(s)^{m\alpha}} {T}_{t} \left\{ \mathcal{N}\left[{T}_t^{-1}\{\Psi(x,s)\} \right]  + h(x,t)\right\}.
\end{equation}
The residual function for the $m-$th iteration or the $m-$th residual function $GRes_m(x,s)$ can be written as
\begin{align}
GRes_m(x,s) &= \Psi_m(x,s) - \sum_{k=0}^{m-1} \frac{\nu(s)}{\omega(s)^{k\alpha+1}} f_{k}(x) - \frac{1}{\omega(s)^{m\alpha}} {T}_{t} \left\{ \mathcal{N}\left[{T}_t^{-1}\{\Psi_m(x,s)\} \right] +h(x,t) \right\}, \label{kresi}
\end{align}
where $\Psi_k(x,s)$ is obtained from the truncated series (\ref{k-approx}). 
\\
We note that the notion of the residual function is closely related to the definition of the remainder of the series solution (\ref{phi-2}), 
\begin{align} \label{remain}
      R_n(x,s) = \Psi(x,s) - \sum_{k=0}^n \frac{\nu(s)}{\omega(s)^{k\alpha+1}}\phi_k(x). 
\end{align}
The following theorem establishes the convergence of the series solution (\ref{phi-2}).
\begin{thm}  \label{thm-res}
Let $\psi(x,t)$ be a continuous function on $I \times [0, \infty)$ and $|\psi(x,t)| \le Me^{\delta t}$ for some $\delta, M>0$. If $\Psi(x,s) = T \{ \psi(x,t)\}$ is represented as a series function in Eq. (\ref{phi-2}) and there exists a function $K$ such that $\left| \omega(s)\,T\left\{ D_t^{(n+1)\alpha} \psi(x,t)\right\} \right| \le K(x) $ on $I \times (\delta, \beta]$ where $0< \alpha \le 1$ and $\beta < \infty$, then the remainder $R_n(x,s)$ of the series (\ref{phi-2}) exhibits the following inequality 
\begin{equation} \label{reminder}
|R_n(x,s)| \le \frac{K(x)}{\omega(s)^{(n+1)\alpha+1}},\quad x \in I, \; \delta < \omega(s) \le \beta.
\end{equation}
\end{thm}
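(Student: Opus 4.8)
The plan is to recognize the remainder $R_n(x,s)$ as the tail of the series (\ref{phi-2}) and to show that, up to an explicit power of $\omega(s)$, it coincides with the transform of the $(n+1)$-th fractional derivative appearing in the hypothesis. First I would use the definition (\ref{remain}) together with the expansion (\ref{phi-2}) to write
\[
R_n(x,s) = \sum_{k=n+1}^{\infty} \frac{\nu(s)}{\omega(s)^{k\alpha+1}}\,\phi_k(x),
\]
and then reindex by $j = k-(n+1)$, noting $(j+n+1)\alpha+1 = j\alpha + (n+1)\alpha + 1$, to factor out the common power:
\[
R_n(x,s) = \frac{1}{\omega(s)^{(n+1)\alpha+1}}\sum_{j=0}^{\infty}\frac{\nu(s)}{\omega(s)^{j\alpha}}\,\phi_{j+n+1}(x).
\]

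The key step is to identify the remaining sum with $\omega(s)\,T\{D_t^{(n+1)\alpha}\psi(x,t)\}$. To do this I would differentiate the power series (\ref{phi-1}) term by term using part (3) of Lemma \ref{lem-fd} with $m=n+1$: every term with $k<n+1$ vanishes, the $\Gamma(k\alpha+1)$ factors cancel, and after reindexing with $j=k-(n+1)$ one gets
\[
D_t^{(n+1)\alpha}\psi(x,t) = \sum_{j=0}^{\infty}\frac{\phi_{j+n+1}(x)}{\Gamma(j\alpha+1)}\,t^{j\alpha}.
\]
Applying $T$ term by term and invoking part (2) of Lemma \ref{lem-prelim} (with the convention $T\{1\}=\nu(s)/\omega(s)$ covering the $j=0$ term), each $\Gamma(j\alpha+1)$ cancels again and I would obtain
\[
T\{D_t^{(n+1)\alpha}\psi(x,t)\} = \sum_{j=0}^{\infty}\phi_{j+n+1}(x)\,\frac{\nu(s)}{\omega(s)^{j\alpha+1}},
\]
so that $\omega(s)\,T\{D_t^{(n+1)\alpha}\psi\}$ is precisely the sum appearing in the expression for $R_n$. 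Substituting back yields the clean identity
\[
R_n(x,s) = \frac{\omega(s)\,T\{D_t^{(n+1)\alpha}\psi(x,t)\}}{\omega(s)^{(n+1)\alpha+1}}.
\]

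The conclusion is then immediate: taking absolute values, using $\omega(s)>0$, and applying the hypothesis $\left|\omega(s)\,T\{D_t^{(n+1)\alpha}\psi\}\right|\le K(x)$ on $I\times(\delta,\beta]$ gives
\[
|R_n(x,s)| \le \frac{K(x)}{\omega(s)^{(n+1)\alpha+1}}, \qquad \delta < \omega(s)\le\beta,
\]
which is exactly (\ref{reminder}).

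I expect the genuine obstacle to be the analytic justification of the two term-by-term operations, namely differentiating the fractional series under $D_t^{(n+1)\alpha}$ and interchanging the summation with the integral defining $T$. Both rest on the growth bound $|\psi(x,t)|\le Me^{\delta t}$, which guarantees existence of the transform for $\omega(s)>\delta$, supplemented by a uniform- or dominated-convergence argument on the tail of the series; by contrast, the reindexing and the cancellation of the $\Gamma$-factors are purely bookkeeping and carry no real difficulty.
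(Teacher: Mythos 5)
Your proof is correct and lands on the same central identity as the paper, namely $\omega(s)^{(n+1)\alpha+1}R_n(x,s)=\omega(s)\,T\{D_t^{(n+1)\alpha}\psi(x,t)\}$, but it gets there by a genuinely different route. The paper never touches the tail of the series: it uses Lemma \ref{lem-coef} to identify $\phi_k(x)=D_t^{k\alpha}\psi(x,0)$, writes $R_n(x,s)=\Psi(x,s)-\sum_{k=0}^{n}\frac{\nu(s)}{\omega(s)^{k\alpha+1}}D_t^{k\alpha}\psi(x,0)$, multiplies by $\omega(s)^{(n+1)\alpha+1}$, and recognizes the resulting \emph{finite} combination $\omega(s)^{(n+1)\alpha}\Psi(x,s)-\nu(s)\sum_{k=0}^{n}\omega(s)^{(n+1-k)\alpha-1}D_t^{k\alpha}\psi(x,0)$ as exactly $T\{D_t^{(n+1)\alpha}\psi(x,t)\}$ via the operational formula in part (4) of Lemma \ref{lem-prelim}. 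You instead expand $R_n$ as the series tail, differentiate the fractional power series (\ref{phi-1}) term by term using part (3) of Lemma \ref{lem-fd}, and apply $T$ term by term using part (2) of Lemma \ref{lem-prelim}. What the paper's route buys is precisely the avoidance of the obstacle you flag at the end: since part (4) of Lemma \ref{lem-prelim} is an identity involving only $\Psi$ and the $n+1$ initial values, no interchange of an infinite sum with $D_t^{\alpha}$ or with the transform integral is needed anywhere in the proof — those interchanges are packaged once and for all into the cited Lemmas \ref{lem-coef} and \ref{lem-prelim}. What your route buys is transparency: it exhibits $\omega(s)\,T\{D_t^{(n+1)\alpha}\psi\}$ concretely as the rescaled tail of the series, which also makes the limit statements in Remark \ref{remm} immediate. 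Your two term-by-term steps are no worse than the paper's own standard of rigor (the paper itself passes from (\ref{phi-1}) to (\ref{phi-2}) by exactly such an interchange), but if you want a proof that needs no such justification, replacing your series manipulations with the appeal to Lemma \ref{lem-coef} plus part (4) of Lemma \ref{lem-prelim} turns your argument into the paper's.
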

\begin{proof}  Suppose that $T\left\{ D_t^{k\alpha} \psi(x,t) \right\}$ is defined on $I \times (\delta, \beta]$ for $k=0,1,2, \dots, n$, and 
\begin{align} \label{proof}
    \left| \omega(s)\,T\!\left\{ D_t^{n\alpha} \psi(x,t)\right\} \right| \le K(x),\quad x \in I, \; \delta < \omega(s) < \beta.
\end{align}
Based on Lemma \ref{lem-coef}, the coefficient $\phi_n(x)$ of the series (\ref{phi-2}) becomes $\phi_n(x) = D_t^{n\alpha} \psi(x,0)$. Therefore, the remainder $R_n(x,s)$ in Eq.(\ref{remain}) can be written by
\begin{align*}
    R_n(x,s) &= \Psi(x,s) - \sum_{k=0}^n \frac{\nu(s)}{\omega(s)^{k\alpha+1}} D_t^{k\alpha} \psi(x,0) \\
    \omega(s)^{(n+1)\alpha+1}  R_n(x,s) &= \omega(s)^{(n+1)\alpha+1} \Psi(x,s) - \nu(s)\sum_{k=0}^n \frac{1}{\omega^{(k-n-1)\alpha}} D_t^{k\alpha} \psi(x,0) 
    \\
    &= \omega(x) \left( \omega(s)^{n\alpha} \Psi(x,s) - \nu(s)\sum_{k=0}^n \omega^{(n+1-k)\alpha-1} D_t^{k\alpha} \psi(x,0)  \right)
    \\
    &= \omega(s) T \{ D_t^{(n+1)\alpha} \psi(x,t)\}.
\end{align*}
The last equation obtained by part (4) of Lemma \ref{lem-prelim}. 
It follows by Eq.(\ref{proof}) that
\begin{align*}
    \left| R_n(x,s) \right| &< \frac{K(x)}{\omega(s)^{(n+1)\alpha+1}},\quad x \in I, \; \delta < \omega(s) < \beta.
\end{align*}
\end{proof}

\begin{remark}  \label{remm}
It follows by Eq.(\ref{reminder}) that for each $n \in \mathbb{N}$
\[
0 \le \omega(s)^{n\alpha +1} |R_n(x,s)| \le \frac{K(x)}{\omega(s)^{\alpha}}. 
\]
Thus
\[ \lim_{\omega(s) \to \infty} \omega(s)^{n\alpha +1} |R_n(x,s)| = 0. \]
Therefore
\[
 \lim_{\omega(s) \to \infty} \omega(s)^{n\alpha +1} R_n(x,s) = 0. 
\]
This is equivalent to
\begin{equation} \label{rem}
 \lim_{\omega(s) \to \infty} \omega(s)^{n\alpha +1} GRes_n(x,s) = 0,\quad n = 1,2, \dots . 
\end{equation} 
It is important to note that this limit plays a crucial role in determining the coefficients $\phi_i(x)$.
\end{remark}

\section{Procedure of Computing the Coefficients of Series Solution}
A simplification of the cumbersome coefficient calculation is lacking in most research articles. The main purpose of this study is to streamline the coefficient calculation, thereby significantly enhancing the method's efficiency. 
\subsection{GRPS method for time-fractional differential equation of order $(0,1]$}
Consider
\begin{equation} \label{lrr-1}
 D^{\alpha}_t\psi(x,t) = \mathcal{N}[\psi(x,t)]+h(x,t),\quad \psi(x,0)=f_0(x),
\end{equation} 
where $\mathcal{N}[\psi(x,t)]$ is a nonlinear term expressed in Eq.(\ref{nonli}) and  $0< \alpha \le 1$.
\\
By using the $m-$th residual functions in Eq.(\ref{kresi}), we will calculate all coefficients $\phi_m(x)$ as follows:
\\
\textbf{Step 1}  Find $\phi_1(x)$. By using Eq.(\ref{k-approx}), we obtain 
\begin{align*}
 \Psi_1(x,s)  &= \frac{\nu(s)}{\omega(s)}f_0(x) +\frac{\nu(s)}{\omega(s)^{1+\alpha}} \phi_1(x). 
\end{align*} 
By substituting $\Psi_1$ into Eq.(\ref{kresi}), we get
\begin{align*}
 GRes_1(x,s) 
 &= \frac{\nu(s)}{\omega(s)^{1+\alpha}}\phi_1(x) - \frac{1}{\omega(s)^{\alpha}} {T}_{t} \left\{ \mathcal{N} \left[ {T}_t^{-1} \left\{\frac{\nu(s)}{\omega(s)}f_0(x) +\frac{\nu(s)}{\omega(s)^{1+\alpha}} \phi_1(x) \right\} \right] + h(x,t) \right\} \\
&= \frac{\nu(s)}{\omega(s)^{1+\alpha}}\phi_1(x) - \frac{1}{\omega(s)^{\alpha}}  {T}_{t} \left\{ \mathcal{N} \left[ f_0(x)+\frac{t^{\alpha}\phi_1(x)}{\Gamma(\alpha+1)}   \right] + h(x,t) \right\}. 
\end{align*} 
By multiplying both sides of the above equation by $\frac{\omega(s)^{1+\alpha}}{\nu(s)}$ and applying limit as $\omega(s)$ tends to infinity, we obtain
\begin{align*}
\lim_{\omega(s) \to \infty} \frac{\omega(s)^{1+\alpha}}{\nu(s)} GRes_1(x,s) &=  \phi_1(x) - \lim_{\omega(s) \to \infty} \frac{\omega(s)}{\nu(s)}  {T}_{t} \left\{ \mathcal{N} \left[ f_0(x)+\frac{t^{\alpha} \phi_1(x)}{\Gamma(\alpha+1)}  \right] + h(x,t) \right\}   
\end{align*}
It follows from Eq.(\ref{rem}) that the left hand limit equals zero, we get
\[ 
\phi_1(x)  =  \lim_{\omega(s) \to \infty} \frac{\omega(s)}{\nu(s)} \,{T}_{t} \left\{ \mathcal{N} \left[ f_0(x)+\frac{t^{\alpha}\phi_1(x)}{\Gamma(\alpha+1)}   \right] + h(x,t) \right\}  \]
Using part (3) of Lemma \ref{lem-prelim}, we obtain the reduced coefficient:
\begin{equation}\label{phi}
\phi_1(x) =  \left(\mathcal{N} \left[ f_0(x)+\frac{t^{\alpha} \phi_1(x)}{\Gamma(\alpha+1)}  \right] + h(x,t) \right)\Bigg|_{t=0} = \mathcal{N}[f_0(x)]+ h(x,0). 
\end{equation}
\allowdisplaybreaks
\\
\textbf{Step 2}  Find $\phi_2(x)$. By substituting $\Psi_2$ from Eq.(\ref{k-approx}) into Eq.(\ref{kresi}),
\begin{align*}
 GRes_2(x,s)
  &= \frac{\nu(s)}{\omega(s)^{1+\alpha}}\phi_1(x) +\frac{\nu(s)}{\omega(s)^{1+2\alpha}} \phi_2(x)  \\
  &\qquad - 
  \frac{1}{\omega(s)^{\alpha}} {T}_{t} \left\{ \mathcal{N} \left[ {T}_t^{-1} 
  \left\{ \frac{\nu(s)}{\omega(s)}f_0(x)+\frac{\nu(s)}{\omega(s)^{1+\alpha}} \phi_1(x) +\frac{\nu(s)}{\omega(s)^{1+2\alpha}} \phi_2(x) \right\} \right] + h(x,t) \right\} \\
  &= \frac{\nu(s)}{\omega(s)^{1+\alpha}}\phi_1(x) +\frac{\nu(s)}{\omega(s)^{1+2\alpha}} \phi_2(x)  \\
  &\qquad - 
  \frac{\nu(s)}{\omega(s)^{\alpha}} {T}_{t} \left\{ \mathcal{N} \left[ f_0(x)+\frac{t^{\alpha}\phi_1(x)}{\Gamma(1+\alpha)}  
  +\frac{t^{2\alpha} \phi_2(x)}{\Gamma(1+2\alpha)}  \right] + h(x,t) \right\}.   
\end{align*}
Multiply the above equation by $\frac{\omega(s)^{1+2\alpha}}{\nu(s)}$ and take limit as $\omega(s)$ tends to infinity,
\begin{align*}
\lim_{\omega(s) \to \infty} \frac{\omega(s)^{1+2\alpha}}{\nu(s)} GRes_2(x,s) &= \phi_2(x) + \lim_{\omega(s) \to \infty} \Bigg( \omega(s)^{\alpha} \phi_1(x) - \frac{\omega(s)^{\alpha+1}}{\nu(s)} T_{t} \Bigg\{ \mathcal{N} \Bigg[ f_0(x)  +\frac{t^{\alpha}\phi_1(x)}{\Gamma(1+\alpha)} +\frac{t^{2\alpha} \phi_2(x)}{\Gamma(1+2\alpha)}  \Bigg]  \\
&  
 \qquad \qquad \qquad \qquad   + h(x,t) \Bigg\} \Bigg). 
\end{align*}
It follows from Eq.(\ref{rem}) that the left hand limit equals zero, we obtain
\begin{align}
&\phi_2(x) = \lim_{\omega(s) \to \infty} \left(  \frac{\omega(s)^{1+\alpha}}{\nu(s)}   {T}_{t} \left\{ \mathcal{N} \left[ f_0(x)+\frac{t^{\alpha} \phi_1(x)}{\Gamma(1+\alpha)} 
  +\frac{t^{2\alpha} \phi_2(x)}{\Gamma(1+2\alpha)}  \right] + h(x,t) \right\} - \omega(s)^{\alpha}\phi_1(x)  \right)  \label{phii2-1} \\
  &= \lim_{\omega(s) \to \infty} \frac{\omega(s)}{\nu(s)} \left( \omega(s)^{\alpha}   T_{t} \left\{ \mathcal{N} \left[ f_0(x)+\frac{t^{\alpha} \phi_1(x)}{\Gamma(1+\alpha)} 
  +\frac{t^{2\alpha} \phi_2(x)}{\Gamma(1+2\alpha)}  \right]+ h(x,t) \right\} - \omega(s)^{\alpha-1} \nu(s) \phi_1(x)  \right).  \label{18}
\end{align}
Many researchers solving RPS problems with Laplace-like transforms rely on a specific limit in Eq.(\ref{phii2-1}) to determine the coefficient $\phi_2(x)$.  However, this approach involves complex calculations and raises uncertainty about the limit's existence as $\omega(s)$ approaches infinity. This uncertainty motivates our goal of simplifying the limit.
\\
Based on the choice of $\mathcal{N}$ in Eq. (\ref{nonli}), we first observe the nonlinear term $\mathcal{N}$ in Eq. (\ref{phii2-1}) at the initial point $t=0$,
\begin{align*}
\left( \mathcal{N} \left[ f_0(x)+\frac{t^{\alpha}\phi_1(x)}{\Gamma(1+\alpha)} 
  +\frac{t^{2\alpha} \phi_2(x)}{\Gamma(1+2\alpha)}  \right] + h(x,t) \right) \Bigg|_{t=0} &= \mathcal{N} [f_0(x)] + h(x,0)= \phi_1(x) .
\end{align*}  
Substituting the above $\phi_1(x)$ into Eq.(\ref{18}) and utilizing part (4) of Lemma \ref{lem-prelim}, we get 
\begin{align*}
\phi_2(x) &= \lim_{\omega(s) \to \infty} \frac{\omega(s)}{\nu(s)} \, T_{t} \left\{ D_t^{\alpha} \left( \mathcal{N} \left[ f_0(x)+\frac{t^{\alpha} \phi_1(x)}{\Gamma(1+\alpha)} 
  +\frac{t^{2\alpha} \phi_2(x)}{\Gamma(1+2\alpha)}  \right] + h(x,t)\right)    \right\} .
\end{align*}
Now, to simplify the coefficient $\phi_2$ in the above equation, we use part (3) of Lemma \ref{lem-prelim}:
\begin{align} \label{explain-1}
\phi_2(x) &= D_t^{\alpha} \left( \mathcal{N} \left[ f_0(x)+\frac{t^{\alpha} \phi_1(x)}{\Gamma(1+\alpha)} 
  +\frac{t^{2\alpha} \phi_2(x)}{\Gamma(1+2\alpha)}  \right] + h(x,t)\right) \Bigg|_{t=0}.
\end{align}
However, the coefficient $\phi_2$ in Eq.(\ref{explain-1}) can be further simplified based on the choice of $\mathcal{N}$ in Eq. (\ref{nonli}) and part (3) of Lemma \ref{lem-fd}.
Therefore, Eq.(\ref{explain-1}) can be reduced to  
\begin{align} \label{phii2-new}
\phi_2(x) &= D_t^{\alpha} \left( \mathcal{N} \left[ f_0(x)+\frac{t^{\alpha} \phi_1(x)}{\Gamma(1+\alpha)} 
  +\frac{t^{2\alpha} \phi_2(x)}{\Gamma(1+2\alpha)}  \right] + h(x,t) \right) \Bigg|_{t=0} \nonumber \\
  &= D_t^{\alpha} \left( \mathcal{N} \left[ f_0(x)+\frac{t^{\alpha} \phi_1(x)}{\Gamma(1+\alpha)}   \right] + h(x,t) \right) \Bigg|_{t=0}.
\end{align}
Researchers previously calculated coefficients in the RPSM with Laplace-like transforms by finding the limit of their expansions. This repetitive method has been replaced with a more efficient general formula that applies to all coefficients.
\\
\textbf{Step 3} Using mathematical induction, we will prove that
\begin{align} \label{formu}
\phi_k(x)
&= D_t^{(k-1)\alpha} \left( \mathcal{N} \left[ f_0(x)+\sum_{m=1}^{k-1} \frac{t^{m\alpha}\phi_m(x)}{\Gamma(1+m\alpha)} \right] + h(x,t) \right) \Bigg|_{t=0}, \quad \text{for}\; k = 2,3,\dots 
\end{align}
is a general expression for determining the coefficients within the series solution (\ref{phi-2}) obtained through the general integral transform.
\begin{proof}
Base case established in  Step 2.   \\
To proceed with the inductive step, let us assume the following coefficients formulas hold true for the first $k-1$ coefficients:
\begin{equation} \label{induc}
\begin{aligned}
 \phi_l(x)
&= D_t^{(l-1)\alpha} \left( \mathcal{N} \left[ \psi(x,0)+\sum_{m=1}^{l-1} \frac{t^{m\alpha}\phi_m(x)}{\Gamma(1+m\alpha)} \right] + h(x,t) \right) \Bigg|_{t=0},\quad l = 2,3,\dots, k-1.
\end{aligned}
\end{equation}
With the assumption that $\mathcal{N}$ is defined as in Eq. (\ref{nonli}) together with part (3) of Lemma \ref{lem-fd}, we see that at the initial point $t=0$, the coefficients in Eq.(\ref{induc}) can also be expressed as
\begin{align}
\phi_l(x)
&= D_t^{(l-1)\alpha} \left( \mathcal{N} \left[ \psi(x,0)+\sum_{m=1}^{l+1} \frac{t^{m\alpha}\phi_m(x)}{\Gamma(1+m\alpha)} \right] + h(x,t)\right) \Bigg|_{t=0},\quad l = 2,3,\dots, k-1. \label{induc-2}
\end{align}
Our next step is to demonstrate the validity of the formula for the coefficient $\phi_{k+1}(x)$ 
\\
Similar to previous step, We use the $(k+1)-$th residual function and compute its limit, to obtain  
\begin{align}
\phi_{k+1}(x) &= \lim_{\omega(s) \to \infty} \left(  \omega(s)^{k\alpha+1} T_{t} \left\{ \mathcal{N} \left[ f_0(x)+ \sum_{m=1}^{k+1}\frac{t^{\alpha m} \phi_m(x)}{\Gamma(1+m\alpha)} \right] + h(x,t) \right\}  -  \sum_{m=1}^{k-1} \omega(s)^{(k-m)\alpha }\phi_{m+1}(x)   \right) \nonumber 
\\
 &= \lim_{\omega(s) \to \infty} \frac{\omega(s)}{\nu(s)}\left(\omega(s)^{k\alpha} T_{t} \left\{ \mathcal{N} \left[ f_0(x)+ \sum_{m=1}^{k+1}\frac{t^{\alpha m} \phi_m(x)}{\Gamma(1+m\alpha)} \right] + h(x,t) \right\}  -  \sum_{m=1}^{k-1} \omega(s)^{(k-m)\alpha -1}\phi_{m+1}(x)    \right)   \label{induc-3}
\end{align}
By placing coefficients $\phi_l, l=2,3,\dots,k$ in Eq. (\ref{induc-2}) into Eq. (\ref{induc-3}) and applying part (4) of Lemma \ref{lem-prelim}, we obtain
\begin{align*}
\phi_{k+1}(x) &= \lim_{\omega(s) \to \infty} \frac{\omega(s)}{\nu(s)} \, T_{t} \left\{ D_t^{k\alpha} \left( \mathcal{N} \left[ f_0(x)+\sum_{m=1}^{k+1} \frac{t^{m\alpha}\phi_m(x)}{\Gamma(1+m\alpha)} \right] + h(x,t) \right) \right\}.
\end{align*}
Based on part (3) of Lemma \ref{lem-prelim}, we have
\begin{align*}
\phi_{k+1}(x) = D_t^{k\alpha} \left( \mathcal{N} \left[ f_0(x)+\sum_{m=1}^{k+1} \frac{t^{m\alpha}\phi_m(x)}{\Gamma(1+m\alpha)} \right] + h(x,t) \right) \Bigg|_{t=0}.
\end{align*}
Thus, the proof is complete.
\end{proof}
\textbf{Step 4} As the final step, the series solution of Eq.(\ref{lr-1}) in the Laplace-like domain can be concluded by
\begin{align} \label{eq-sol-s}
\Psi(x,s) &= \frac{\psi(x,0)}{s} + \sum_{k=1}^\infty \frac{\phi_k(x)}{s^{\alpha k + 1}},
\end{align} 
where $\phi_k(x)$ is given by Eq.(\ref{formu}).
\\
Finally, by applying the inverse general transform to Eq.(\ref{eq-sol-s}). This solution in time domain is expressed in the following form
\begin{align} \label{eq:solu}
\psi(x,t) &= \psi(x,0) + \sum_{k=1}^\infty \frac{\phi_{k}(x)}{\Gamma(1+k\alpha)} \, t^{k\alpha}.
\end{align}
\subsection{GRPS method for the generalized time-fractional differential equations}
Consider the generalized time-fractional differential equations of the following form:
\begin{align} \label{highh}
D^{n\alpha} \psi(x,t) &= \mathcal{N}[\psi(x,t)]+h(x,t), \quad 0< \alpha \le 1,\; n\in \mathbb{Z}^+,
\end{align}
subject to the initial conditions (\ref{ics}), where $\mathcal{N}[\psi(x,t)]$ denotes an operator defined in Eq.(\ref{nonli}) and $h(x,t)$ is a non-homogeneous function. 
Based on the initial conditions (\ref{ics}), we obtain the first $n-1$ coefficients for the series solution (\ref{phi-2}). Therefore, we aim to find the subsequent coefficients $\phi_l(x),\;l=n, n+1, \dots$
\\
\textbf{Step 1}\;  Find $\phi_n(x)$. We substitute  $\Psi_{n}(x,s)$ from  Eq.(\ref{k-approx}) into Eq.(\ref{kresi}), to obtain
 \begin{align*}
 GRes_n(x,s) &= \frac{\nu(s)}{\omega(s)^{1+n\alpha}}\phi_n(x)- \frac{1}{\omega(s)^{n\alpha}} {T}_{t} \left\{ \mathcal{N}\left[ T_t^{-1} \left\{ \sum_{k=0}^{n-1} \frac{\nu(s)}{\omega^{1+k\alpha}}f_k(x) + \frac{\nu(s)}{\omega^{1+n\alpha}} \phi_n(x)  \right\} \right]+ h(x,t) \right\} \\
 &= \frac{\nu(s)}{\omega(s)^{1+n\alpha}}\phi_n(x)-\frac{1}{\omega(s)^{n\alpha}} {T}_t \left\{  \mathcal{N} \left[\sum_{k=0}^{n-1} \frac{t^{k\alpha}}{\Gamma(1+k\alpha)}f_k(x) + \frac{t^{n\alpha}}{\Gamma(1+n\alpha)} \phi_n(x) \right] + h(x,t) \right\}
 \end{align*}
After multiplying the above equation by $ \frac{\omega(s)^{1+m\alpha}}{\nu(s)}$ and take limit as $\omega(s)$ tends to infinity, we get
\begin{align*}
\phi_n(x)&= \lim_{s \to \infty} \frac{\omega(s)}{\nu(s)} {T}_t \left\{ \mathcal{N} \left[\sum_{k=0}^{n-1} \frac{t^{k\alpha}}{\Gamma(1+k\alpha)}f_k(x) + \frac{t^{n\alpha}}{\Gamma(1+n\alpha)} \phi_n(x) \right] + h(x,t) \right\}.
\end{align*}
By employing part (1) of Lemma \ref{lem-prelim} and based on the choice of $\mathcal{N}$ as in Eq. (\ref{nonli}), we get
\begin{align}\label{phi2-hig}
\phi_n(x) &= \mathcal{N} \left[ \sum_{k=0}^{n-1} \frac{t^{k\alpha}}{\Gamma(1+k\alpha)}f_k(x) + \frac{t^{n\alpha}}{\Gamma(1+n\alpha)} \phi_n(x)   \right] \Bigg|_{t=0} + h(x,0) = \mathcal{N}[f_0(x)] + h(x,0).
\end{align}
\\
\textbf{Step 2}\; Find $\phi_{n+1}(x)$. substituting $m=n+1$  into Eq.(\ref{k-approx}) and Eq.(\ref{kresi}) yield
\begin{align*}
 GRes_{n+1}(x,s) &= \frac{\nu(s)}{\omega(s)^{1+n\alpha}}\phi_n(x) +\frac{\nu(s)}{\omega(s)^{1+(n+1)\alpha}}\phi_{n+1}(x) 
 \\
 &\quad -\frac{1}{\omega(s)^{n\alpha}} T_t \left\{  \mathcal{N} \left[ T_t^{-1} \left\{ \sum_{k=0}^{n-1} \frac{f_k(x)}{s^{1+k\alpha}} + \frac{\phi_n(x)}{s^{1+n\alpha}} + \frac{\phi_{n+1}(x)}{s^{1+(n+1)\alpha}} \right\} \right] + h(x,t) \right\} \\
 &= \frac{\nu(s)}{\omega(s)^{1+n\alpha}}\phi_n(x) +\frac{\nu(s)}{\omega(s)^{1+(n+1)\alpha}}\phi_{n+1}(x) 
 \\
 &\qquad -\frac{1}{\omega(s)^{n\alpha}} T_t \left\{  \mathcal{N} \left[  \sum_{k=0}^{n-1} \frac{f_k(x)\,t^{k\alpha}}{\Gamma(1+k\alpha)} + \frac{\phi_n(x)\, t^{n\alpha}}{\Gamma(1+n\alpha)} + \frac{\phi_{n+1}(x)\,t^{(n+1)\alpha}}{\Gamma(1+(n+1)\alpha)}  \right] + h(x,t)\right\}.
 \end{align*}
Multiplying the previous equation by $\frac{\omega(s)^{1+(n+1)\alpha}}{\nu(s)}$ and taking limit as $\omega(s)$ approaches infinity to get
\begin{align}
\phi_{n+1}(x) &= \lim_{\omega(s) \to \infty} \left[ \frac{\omega(s)^{1+\alpha}}{\nu(s)} T_t \left\{ \mathcal{N}\left[ \sum_{k=0}^{n-1} \frac{f_k(x)\,t^{k\alpha}}{\Gamma(1+k\alpha)} + \frac{\phi_n(x)\, t^{n\alpha}}{\Gamma(1+n\alpha)} + \frac{\phi_{n+1}(x)\,t^{(n+1)\alpha}}{\Gamma(1+(n+1)\alpha)}  \right] + h(x,t) \right\} \right. \nonumber
\\
&\qquad \qquad \qquad  - \omega(s)^\alpha \phi_n(x) \Bigg]  \nonumber
\\
&= \lim_{\omega(s) \to \infty} \frac{\omega(s)}{\nu(s)}\left[ \omega(s)^{\alpha} T_t \left\{ \mathcal{N}\left[ \sum_{k=0}^{n-1} \frac{f_k(x)\,t^{k\alpha}}{\Gamma(1+k\alpha)} + \frac{\phi_n(x)\, t^{n\alpha}}{\Gamma(1+n\alpha)} + \frac{\phi_{n+1}(x)\,t^{(n+1)\alpha}}{\Gamma(1+(n+1)\alpha)}  \right] + h(x,t) \right\} \right. \nonumber
\\
&\qquad \qquad \qquad \qquad - \nu(s)\omega(s)^{\alpha-1} \phi_n(x) \Bigg]. \label{gres}
\end{align}
Based on the choice of $\mathcal{N}$ in Eq.(\ref{nonli}) and part (3) of Lemma \ref{lem-fd}, the coefficient $\phi_n(x)$ in Eq.(\ref{phi2-hig}) becomes
\[ \phi_{n}(x) = \mathcal{N}[f_0(x)] +h(x,0) = \mathcal{N}\left[ f_0(x)+\sum\limits_{k=1}^{n-1} \frac{f_k(x)\,t^{k\alpha}}{\Gamma(1+k\alpha)} + \frac{\phi_n(x)\, t^{n\alpha}}{\Gamma(1+n\alpha)} + \frac{\phi_{n+1}(x)\,t^{(n+1)\alpha}}{\Gamma(1+(n+1)\alpha)} \right] \Bigg|_{t=0} + h(x,0).
\] 
After substituting $\phi_{n}(x)$ into  Eq.(\ref{gres}), and leveraging part (4) of Lemma \ref{lem-prelim}, Eq.(\ref{gres}) can be reduced to a simpler form as shown below
\begin{align*}
\phi_{n+1}(x)
&= \lim_{\omega(s) \to \infty} \frac{\omega(s)}{\nu(s)} T_t \left\{ D_t^{\alpha} \left( \mathcal{N}\left[f_0(x)+\sum\limits_{k=1}^{n-1} \frac{f_k(x)\,t^{k\alpha}}{\Gamma(1+k\alpha)} + \frac{\phi_n(x)\, t^{n\alpha}}{\Gamma(1+n\alpha)} + \frac{\phi_{n+1}(x)\,t^{(n+1)\alpha}}{\Gamma(1+(n+1)\alpha)} \right]+ h(x,t)\right) \right\} .
\end{align*}
Utilizing part (3) of Lemma \ref{lem-prelim}, we get
\begin{align*}
\phi_{n+1}(x) &= D_t^{\alpha} \left( \mathcal{N}\left[ f_0(x)+\sum\limits_{k=1}^{n-1} \frac{f_k(x)\,t^{k\alpha}}{\Gamma(1+n\alpha)} + \frac{\phi_n(x)\, t^{n\alpha}}{\Gamma(1+n\alpha)} + \frac{\phi_{n+1}(x)\,t^{(n+1)\alpha}}{\Gamma(1+(n+1)\alpha)}  \right] + h(x,t)\right) \Bigg|_{t=0}
\end{align*}
Under the assumption of $\mathcal{N}$ in Eq.(\ref{nonli}) and using part (3) of Lemma \ref{lem-fd}, we can simplify the coefficient $\phi_{n+1}(x)$ to  
\begin{align} \label{phi_2}
\phi_{n+1}(x) &= D_t^{\alpha} \left( \mathcal{N}\left[ f_0(x) +\frac{f_1(x)\,t^\alpha}{\Gamma(1+\alpha)} \right] + h(x,t)\right) \Bigg|_{t=0}.
\end{align}
\\
\textbf{Step 3}\; Find $\phi_k(x),\;k=n+1, n+2, \dots$. We can skip the inductive step for brevity, as it follows the same approach presented in Step 3 of Section 4.1. Then the coefficients formula for the GRPS solution for time-fractional PDE. in Eq.(\ref{highh}) is
\begin{align} \label{phi-k-2}
\phi_k(x) &= D_t^{(k-n)\alpha} \left( \mathcal{N} \left[ \sum_{m=0}^{k-n}  \frac{\phi_{m}(x)\,t^{m\alpha}}{\Gamma(1+m\alpha)} \right] + h(x,t) \right) \Bigg|_{t=0},\quad k=n+1, n+2, \dots,
\end{align}
where $\phi_k(x)=f_k(x), k=0,1,2,\dots, n-1$ is obtained from the initial conditions (\ref{ics}).
\begin{remark}
   Suppose the term $\mathcal{N}$ on the right-hand side of Eq.(\ref{highh}) is linear, written as $\mathcal{N}[u]=L[u]$, where $L$ denotes a linear operator. In this case, the formula for the coefficients in Eq.(\ref{phi-k-2}) becomes
    \begin{align*}
\phi_k(x)
&= D_t^{(k-n)\alpha} \left( \sum_{m=0}^{k-n} L \left[\frac{\phi_m(x)t^{m\alpha}}{\Gamma(1+m\alpha)} \right] + h(x,t) \right) \Bigg|_{t=0}, \quad  k = n, n+1, n+2, \dots. 
    \end{align*}
Utilizing part (3) of Lemma \ref{lem-fd}, we obtain the coefficients formulas for a time-fractional linear PDE as follow
\begin{align}\label{lin-co}
    \phi_k(x) &= L[\phi_{k-n}(x)] + D_t^{(k-n)\alpha} h(x,t)\Big|_{t=0},\quad k=n, n+1, n+2, \dots.
\end{align}
\end{remark}
\section{Illustrative example}
This section emphasizes the efficacy and user-friendliness of our novel formula for determining GRPS coefficients. To showcase the broad applicability of our results, we present solutions for four well-established examples.

The first example focuses on the time-fractional Black-Scholes equation. Prior studies have employed the RPS method \cite{BSrpsm2019} and the Aboodh RPS method \cite{BSabrps2023} to solve this equation.  Our proposed formula offers a simple and alternative approach.
The second example demonstrates the solution for the (3+1) wave equation. We obtain the same solution achieved in previous work using the Elzaki RPS method \cite{elzrps-2024}.
The third example addresses the time-fractional biological population diffusion equation. We demonstrate that our formula produces a robust solution consistent with the study that utilized the Elzaki RPS method \cite{elzrps-2021}.
The final example delves into the non-homogeneous time-fractional gas dynamics equation. We derive a series solution using our formula. Notably, some coefficients derived using our formula differ slightly from the previous work utilizing  the GRPS method \cite{robust2024}. To address this difference,
we present the solution utilizing the direct method (omitted in \cite{robust2024}), demonstrating that the resulting coefficient aligns with the one derived from our formula.

In contrast to the traditional RPS approach reliant on Laplace-like transforms, our method utilizes the explicit coefficients formula to construct approximate solutions for these diverse problems, offering a more streamlined and potentially more efficient approach.

\begin{ex} \label{ex-bs} Consider a time-fractional Black-Scholes equation of order $0< \alpha \le 1$: \cite{BSrpsm2019,BSabrps2023}
\begin{align} \label{eq-bs}
D_t^\alpha V(y,t) &= -0.08(2\sin y)^2y^2\,V_{yy}-0.06yV_y +0.06V,
\\
\qquad V(y,0) &= \max\{y-25e^{-0.06},0\}. \nonumber
\end{align}
The expression on the RHS of this equation is linear: $L[V] = -0.08(2\sin y)^2y^2\,V_{yy}-0.06yV_y +0.06V$.
\\
According to formula (\ref{lin-co}) of the coefficients, we compute the following coefficients:  
\[
\phi_1(y) = L[V(y,0)]= L[\max\{y-25e^{-0.06},0\}]. 
\]
Therefore,
\[   \phi_1(y)=0.06(\max\{y-25e^{-0.06},0\}-y).  \]
Similarly, to determine the coefficient $\phi_2(x)$ in Eq.(\ref{lin-co}) 
\begin{align*}
\phi_2(y) &= L[\phi_1(y)]
\\
&= -0.08(2\sin y)^2 y^2 \phi''_1(y)-0.06y \phi_1'(y)+0.06\phi_1(y)
\\
 &= 0.06^2(\max\{y-25e^{-0.06},0\}-y).
\end{align*}
Likewise, the remaining coefficients $\phi_k(y)$ for $k=3,4,\dots$ can be  determined by
\begin{align}
\phi_k(y) &=  0.06^k(\max\{y-25e^{-0.06},0\}-y).
\end{align}
Finally, substitute these coefficients in Eq. (\ref{eq:solu}), we obtain the series solution for the time-fractional Black-Scholes equation (\ref{eq-bs}) below
\begin{align}
    V(y,t) &= 0.06(\max\{y-25e^{-0.06},0\}-y) + \sum_{n=1}^\infty 0.06^n(\max\{y-25e^{-0.06},0\}-y) \frac{t^{n\alpha}}{\Gamma(1+n\alpha)}  
\end{align}
Our solution aligns with both \cite{BSrpsm2019} (using the traditional RPS method) and \cite{BSabrps2023} (using the Aboodh RPS method), but with a more efficient approach for deriving the coefficients.
\end{ex}
\begin{ex}
Consider the $(3+1)$ wave problem: \cite{elzrps-2024}
\begin{align} \label{eq-30}
   D_t^{2\alpha} \psi(x,y,z,t) &= \frac{x^2}{2}\psi_{xx}(x,y,z,t) + \frac{y^2}{2}\psi_{yy}(x,y,z,t) +
    \frac{z^2}{2}\psi_{zz}(x,y,z,t) + x^2+y^2+z^2,\quad 0< \alpha \le 1,
\end{align}
subject to the initial conditions:
\[ \psi(x,y,z,0)=0,\quad D_t^\alpha \psi(x,y,z,0) = x^2+y^2-z^2.\]
Note that Eq.(\ref{eq-30}) is a non-homogeneous linear equation with variable coefficients, i.e.,
\[ L[\psi] = \frac{x^2}{2}\psi_{xx} + \frac{y^2}{2}\psi_{yy}+
    \frac{z^2}{2}\psi_{zz}\quad \text{and} \quad h(x,y,z,t) = x^2+y^2+z^2. \]
Applying the formula (\ref{lin-co}) for the case m=2, we obtain the following coefficients
\begin{align*}
    \phi_2(x,y,z) &= L[\psi(x,y,z,0)] + h(x,y,z,0) = L[0] + x^2+y^2+z^2 = x^2+y^2+z^2 \\
    \phi_3(x,y,z) &= L[D_t^\alpha \psi(x,y,z,0)] + D_t^{\alpha} h(x,y,z,t)\Big|_{t=0} =L[x^2+y^2-z^2] = x^2+y^2-z^2 \\
    \phi_4(x,y,z) &= L[D_t^{2\alpha} \phi_2(x,y,z)] + D_t^{2\alpha} h(x,y,z,t)\Big|_{t=0} =L[x^2+y^2+z^2] = x^2+y^2+z^2 \\   
    \phi_5(x,y,z) &= L[D_t^{3\alpha} \phi_3(x,y,z)] + D_t^{3\alpha} h(x,y,z,t)\Big|_{t=0} =L[x^2+y^2-z^2] = x^2+y^2-z^2.  
\end{align*}
This can be concluded that 
\begin{align}
    \phi_{2k} = x^2+y^2+z^2 \quad \text{and} \quad \phi_{2k-1} = x^2+y^2-z^2,\qquad \text{for}\; k \in \mathbb{Z}^+,
\end{align}
which is the same as shown in \cite{elzrps-2024} by using the Elzaki RPS method. 
\\
Furthermore, the $2k-$th approximate solution is
\[ \psi_{2k}(x,y,z,t) = (x^2+y^2+z^2) \sum_{n=1}^k \frac{t^{2n\alpha}}{\Gamma(1+2n\alpha)} + (x^2+y^2-z^2) \sum_{n=1}^k \frac{t^{(2n-1)\alpha}}{\Gamma(1+(2n-1)\alpha)}. \]
\end{ex}
\begin{ex}\label{ex-bio} Consider the time-fractional biological population diffusion equation:  \cite{elzrps-2021}
\begin{align} \label{eq-bio}
    D_t^{\alpha}u(x,y,t) &= [u^2]_{xx}+[u^2]_{yy}+cu(1-ru), 
\end{align}
subject to the initial condition:
\begin{align} \label{ex-ic}
    u(x,y,0) = \phi_0(x,y)= e^{\sqrt{\frac{cr}{8}}(x+y)}.
\end{align}
where $0< \alpha \le 1$ and $c$ is a given constant.
\\
The nonlinear RHS term is $\mathcal{N}[u] = [u^2]_{xx}+[u^2]_{yy}+cu(1-ru).$
\\
By using Eq.(\ref{phi}), we obtain
\begin{align*}
     \phi_1(x,y) &= \mathcal{N}[\phi_0(x,y)] 
     \\
     &= [\phi_0^2]_{xx} + [\phi_0^2]_{yy} + c \phi_0 (1-r\phi_0) 
\end{align*}
Based on the initial function $phi_0(x,y)$ in Eq.(\ref{ex-ic}), we obtain
\begin{align} \label{ex-p1}
    \phi_1(x,y) &= c\phi_0 = ce^{\sqrt{\frac{cr}{8}}(x+y)}.
\end{align}
Utilizing the coefficient formula (\ref{formu}) when $k=2$, we obtain the second coefficient
\begin{align}\label{eq-31}
    \phi_2(x,y) &= D_t^{\alpha} \left( \mathcal{N}\left[ \underbrace{ \phi_0+  \frac{\phi_1 t^\alpha}{\Gamma(1+\alpha)}}_{=:\Theta_1} \right]   \right) \Bigg|_{t=0} = D_t^{\alpha} \left( [\Theta_1^2]_{xx} + [\Theta_1^2]_{yy} + c \Theta_1 -cr\Theta_1^2) \right) \Big|_{t=0},
\end{align}
We now compute the following terms by using part (3) of Lemma \ref{lem-fd}:
\begin{align} \label{eq-3-2a}
D_t^{\alpha}(\Theta_1)\Big|_{t=0} = D_t^{\alpha} \left( \phi_0+  \frac{\phi_1 t^\alpha}{\Gamma(1+\alpha)} \right) \Bigg|_{t=0} = \phi_1     
\end{align}
and
\begin{align} \label{eq-3-2b}
    D_t^{\alpha}(\Theta_1^2)\Big|_{t=0} &= D_t^{\alpha} \left( \phi_0^2 + 2 \frac{\phi_0\phi_1 t^\alpha}{\Gamma(1+\alpha)}
    + \frac{\phi_1^2 t^{2\alpha}}{(\Gamma(1+\alpha))^2}\right) \Bigg|_{t=0} = 2\phi_0\phi_1.
\end{align}
Placing Eqs.(\ref{eq-3-2a}) and (\ref{eq-3-2b}) into Eq.(\ref{eq-31}),
we obtain the second coefficients
\begin{align} \label{ex4-ph2}
    \phi_2(x,y) &= 2[\phi_0\phi_1]_{xx} + 2[\phi_0\phi_1]_{yy} + c \phi_1 - 2cr \phi_0\phi_1.
\end{align}
By substituting $\phi_0$ from Eq.(\ref{ex-ic}) and $\phi_1$ from Eq.(\ref{ex-p1}) into Eq. (\ref{ex4-ph2}), we obtain the second coefficient as shown below:
\begin{align} \label{ex3eq2}
    \phi_2(x,y) &= c^2e^{\sqrt{\frac{cr}{8}}(x+y)}.
\end{align}
To determine subsequent coefficients $\phi_k(x,y), \; k= 3,4,\dots$, we use the coefficients formula (\ref{formu})
\begin{align}\label{eq-3k}
    \phi_k(x,y) &= D_t^{(k-1)\alpha} \left( \mathcal{N}\left[ \underbrace{ \sum_{n=0}^{k-1} \frac{\phi_n(x,y) t^{n\alpha}}{\Gamma(1+n\alpha)} }_{=:\Theta_{k-1}} \right]   \right)\; \Bigg|_{t=0} \nonumber
    \\
  &= D_t^{(k-1)\alpha} \left( [\Theta_{k-1}^2]_{xx} + [\Theta_{k-1}^2]_{yy} + c \Theta_{k-1} - cr\Theta_{k-1}^2  \right) \Big|_{t=0}.
\end{align}
Using part (3) of Lemma \ref{lem-fd}, we compute the following terms 
\begin{align} \label{eq-3-5a}
D_t^{(k-1)\alpha}(\Theta_{k-1})\Big|_{t=0} = D_t^{(k-1)\alpha} \left(  \sum_{n=0}^{k-1} \frac{\phi_n(x,y) t^{n\alpha}}{\Gamma(1+n\alpha)} \right) \Bigg|_{t=0} = \phi_{k-1}(x,y)  
\end{align}
and
\begin{align} \label{eq-3-5b}
    D_t^{(k-1)\alpha}(\Theta_{k-1}^2)\Big|_{t=0} &=  D_t^{(k-1)\alpha} \left( \sum_{m=0}^{k-1}\sum_{n=0}^{k-1} \frac{ t^{(m+n)\alpha}}{\Gamma(1+m\alpha)\Gamma(1+n\alpha)} \phi_{m}(x,y)\phi_{n}(x,y) \right)\Bigg|_{t=0}
    \nonumber
    \\
    &= \sum_{m=0}^{k-1} \frac{\Gamma(1+(k-1)\alpha)}{\Gamma(1+m\alpha)\Gamma(1+(k-1-m)\alpha))}\phi_m(x,y)\phi_{k-1-m}(x,y). 
\end{align}
Placing Eqs.(\ref{eq-3-5a}) and (\ref{eq-3-5b}) into Eq.(\ref{eq-3k}),
to obtain 
\begin{align}\label{eq-355}
    \phi_k(x,y) &=  \sum_{m=0}^{k-1} \frac{\Gamma(1+(k-1)\alpha)}{\Gamma(1+m\alpha)\Gamma(1+(k-1-m)\alpha))} \left( [\phi_m\phi_{k-1-m}]_{xx} + [\phi_m\phi_{k-1-m}]_{yy} -cr \phi_m\phi_{k-1-m}(x,y) \right) \nonumber
    \\
    &\qquad \qquad + c\phi_{k-1},\quad k=3,4,\dots.
\end{align}
It is easy to see that if $\phi_n(x) = c^n e^{\sqrt{\frac{cr}{8}}(x+y)}$, \; $n=0,1,\dots,k-1$, then 
\begin{align*}
     \left[\phi_m\phi_{k-1-m} \right]_{xx} + \left[\phi_m\phi_{k-1-m} \right]_{yy} -cr \phi_m\phi_{k-1-m} = 0, 
\end{align*}
for all $k=3,4,\dots$ and $m=0,1,\dots,k-1$.
\\
Hence, the $k-$th coefficient in Eq.(\ref{eq-355}) becomes
\begin{align*}
    \phi_k(x,y) &= c\phi_{k-1}(x,y), \quad k=3,4,\dots.
\end{align*}
Based on the coefficient $\psi_2$ in Eq.(\ref{ex3eq2}) , we can conclude that the coefficients of the series solution of (\ref{eq-bio}) are
\begin{align*}
    \phi_n(x,y) &= c^n e^{\sqrt{\frac{cr}{8}}(x+y)}, \quad n=0,1,2,\dots.
\end{align*}
The coefficients obtained in this work are identical to those presented in \cite{lrpsm2021soliton} for the Elzaki RPS method. However, our derivation offers a more streamlined and robust approach for determining coefficients within the context of series solutions. In particular, the $k-$th approximate solution is
\[ u_n(x,y,t) = e^{\sqrt{\frac{cr}{8}}(x+y)}\sum_{m=0}^k \frac{(ct^{\alpha})^m}{\Gamma(1+m\alpha)}. \]
As $n \to \infty$, the exact solution of the time-fractional biological population diffusion equation (\ref{eq-bio}) is
\[ u(x,y,t) = e^{\sqrt{\frac{cr}{8}}(x+y)} E_{\alpha,1}(ct^\alpha),\]
where $E_{\alpha,\beta}(t)$ is the Mittag-Leffler function defined as $E_{\alpha,\beta}(t) =\sum\limits_{m=0}^\infty \frac{t^m}{\Gamma(\beta+m\alpha)}.$
\end{ex}
\begin{ex}
Consider the non-homogeneous nonlinear time-fractional gas dynamics equation: \cite{robust2024}
\begin{align}
    D_t^{\alpha}u(x,t)=cu(1-u)-uu_x+h(x,t),\quad 0< \alpha \le 1,
\end{align}
subject to the initial condition: $u(x,0) = \phi_0(x)$, where $c$ is a given constant and 
$h(x,t)$ is a continuous function.
\\
Note that $h(x,t)$ is the non-homogeneous term and the nonlinear term is 
\begin{align} \label{ex4-nonli}
\mathcal{N}[u] =cu(1-u)-uu_x. 
\end{align}
To determine the coefficients, we use the formula (\ref{phi}) to obtain $\phi_1$: 
\begin{align} \label{ex5-ph1}
    \phi_1(x) &= \mathcal{N}[\phi_0(x)] + h(x,0) = c\phi_0(x)(1-\phi_0(x))-\phi_0(x)\phi'_0(x) + h(x,0).
\end{align}
For the subsequent coefficients, we use the formula (\ref{formu}):
\begin{align}
    \phi_2(x,t) &= D_t^{\alpha} \left( \mathcal{N}\left[\underbrace{\phi_0(x) + \frac{\phi_1(x) t^{\alpha}}{\Gamma(1+\alpha)}}_{=:\Theta_1} \right]\,\right) \;\Bigg|_{t=0} = D_t^{\alpha} \left( c\Theta_1(1 -\Theta_1) - \Theta_1[\Theta_1]_x\right) \Big|_{t=0} + D_t^{\alpha} h(x,t)\Big|_{t=0}.  \label{ex4-ph22}
\end{align}
Observe the following terms, by using part (3) of Lemma \ref{lem-fd}, we obtain 
\begin{align}
     D_t^{\alpha} \left( c\Theta_1(1 -\Theta_1)\right) \Big|_{t=0} &= c  D_t^{\alpha} \left( \Theta_1  \right)\Big|_{t=0} - c  D_t^{\alpha} \left(\Theta_1^2 \right)\Big|_{t=0} \nonumber
     \\
     &= c D_t^{\alpha} \left( \phi_0(x) + \frac{\phi_1(x) t^{\alpha}}{\Gamma(1+\alpha)} \right)\Big|_{t=0} - c D_t^{\alpha} \left( \phi_0^2 + 2 \frac{\phi_0\phi_1 t^\alpha}{\Gamma(1+\alpha)}
    + \frac{\phi_1^2 t^{2\alpha}}{(\Gamma(1+\alpha))^2}\right) \Bigg|_{t=0} 
    \nonumber
     \\
     &= c\phi_1(x) - 2c \phi_0(x) \phi_1(x), \label{A}
\end{align}
and
\begin{align}
     D_t^{\alpha} \left( \Theta_1[\Theta_1]_x\right)\Big|_{t=0} &= D_t^{\alpha} \left( \phi_0(x)\phi'_0(x) +  \frac{\phi_0(x)\phi'_1(x) \, t^{\alpha}}{\Gamma(1+\alpha)} + \frac{\phi'_0(x)\phi_1(x)\, t^{\alpha}}{\Gamma(1+\alpha)} + \frac{\phi'_1(x)\phi_1(x) t^{2\alpha}}{(\Gamma(1+\alpha))^2}   \right)\Bigg|_{t=0} \nonumber
     \\
     &= \phi_0(x)\phi'_1(x)+\phi'_0(x)\phi_1(x). \label{B}
\end{align}
Substituting Eqs.(\ref{A}) and (\ref{B}) into Eq.(\ref{ex4-ph22}), we obtain  the second coefficient 
\begin{align} \label{ex5-ph2}
    \phi_2(x) = c\phi_1(x) - 2c \phi_0(x) \phi_1(x) - \phi_0(x) \phi_1'(x) - \phi_0'(x) \phi_1(x) + D_t^{\alpha} h(x,t)\Big|_{t=0},
\end{align}
which is the same as the coefficient obtained in \cite{robust2024}.
\\
Likewise, to determine the third coefficient, we use the formula (\ref{formu}):
\begin{align}
    \phi_3(x) &=  D_t^{2\alpha} \left( \mathcal{N}\left[\underbrace{\phi_0(x) + \frac{\phi_1(x) t^{\alpha}}{\Gamma(1+\alpha)} + \frac{\phi_2(x) t^{2\alpha}}{\Gamma(1+2\alpha)}}_{=:\Theta_2} \right]\,\right) \;\Bigg|_{t=0}  \nonumber
    \\
    &= D_t^{2\alpha} \left( c\Theta_2(1 -\Theta_2) - \Theta_2[\Theta_2]_x\right) \Big|_{t=0} + D_t^{2\alpha} h(x,t)\Big|_{t=0}. \label{eq-5-0}
\end{align}
Evaluate the following terms, we use part (3) of Lemma \ref{lem-fd} to calculate fractional derivatives of the fractional power functions:
\begin{align}
    D_t^{2\alpha} (\Theta_2)|_{t=0} &= \phi_2(x), \label{eq-5-1}
    \\
    D_t^{2\alpha} (\Theta_2^2)|_{t=0} &=    D_t^{2\alpha} \left( \sum_{n=0}^2 \frac{\phi_n(x)t^{n\alpha}}{\Gamma(1+n\alpha)}\right)^2 \nonumber
    \\
    &=   D_t^{2\alpha} \left( \sum_{m=0}^2 \sum_{n=0}^2 \frac{t^{(m+n)\alpha}}{\Gamma(1+m\alpha)\Gamma(1+n\alpha)} \phi_m\phi_n \right) \Bigg|_{t=0} \nonumber
    \\
    &= \sum_{m=0}^{2} \frac{\Gamma(1+2\alpha)}{\Gamma(1+m\alpha)\Gamma(1+(2-m)\alpha))}\phi_m\phi_{2-m} \nonumber
    \\
    &= 2\phi_0\phi_2 + \frac{\Gamma(1+2\alpha)}{(\Gamma(1+\alpha))^2} \phi_1^2, \label{eq-5-2}
\end{align}
and
\begin{align}
    D_t^{2\alpha} \Big( \Theta_2[\Theta_2]_x \Big)\Big|_{t=0}
    &=   D_t^{2\alpha} \left( \sum_{m=0}^2 \sum_{n=0}^2 \frac{t^{(m+n)\alpha}}{\Gamma(1+m\alpha)\Gamma(1+n\alpha)} \phi_m\phi'_n \right) \Bigg|_{t=0}
    \nonumber \\
    &= \sum_{m=0}^{2} \frac{\Gamma(1+2\alpha)}{\Gamma(1+m\alpha)\Gamma(1+(2-m)\alpha))}\phi_m\phi'_{2-m} \nonumber
    \\
    &= \phi_0\phi'_2 + \phi'_0\phi_2 + \frac{\Gamma(1+2\alpha)}{(\Gamma(1+\alpha))^2} \phi_1\phi'_1. \label{eq-5-3}
\end{align}
Substituting Eqs.(\ref{eq-5-1}), (\ref{eq-5-2}) and (\ref{eq-5-3}) into Eq.(\ref{eq-5-0}), the third coefficient becomes
\begin{align} \label{ourph3}
    \phi_3(x) &= c\phi_2 -2c\phi_0\phi_2 -c\frac{\Gamma(1+2\alpha)}{(\Gamma(1+\alpha))^2} \phi_1^2 - \phi_0\phi'_2 - \phi'_0\phi_2 - \frac{\Gamma(1+2\alpha)}{(\Gamma(1+\alpha))^2} \phi_1\phi'_1 + D_t^{2\alpha} h(x,t)\Big|_{t=0}.
\end{align}
which is different to the third coefficient that appeared in \cite{robust2024}. 

In \cite{robust2024}, the derivation of a specific formula omitted the calculation of $\phi_3(x)$. To address this gap and ensure the validity of our own derivation, we will employ the third general residual function to directly compute $\phi_3(x)$.
\[ 
    GRes_3(x,s) = \Psi_3(x,s) - \frac{\nu(s)}{\omega(s)} \phi_0(x) - \frac{1}{\omega(s)^{\alpha}} T_t\{ \mathcal{N} [T_t^{-1} \{ \Psi_3(x,s) \}] + h(x,t)\},
\]
where 
\begin{align} \label{psii3}
     \Psi_3(x,s) = \frac{\nu(s)}{\omega(s)} \phi_0(x) + \frac{\nu(s)}{\omega(s)^{1+\alpha}} \phi_1(x) + \frac{\nu(s)}{\omega(s)^{1+2\alpha}} \phi_2(x) + \frac{\nu(s)}{\omega(s)^{1+3\alpha}} \phi_3(x). 
\end{align}
Then 
\begin{align} \label{ex-5a}
    GRes_3(x,s) &= \frac{\nu(s)}{\omega(s)^{1+\alpha}} \phi_1(x) + \frac{\nu(s)}{\omega(s)^{1+2\alpha}} \phi_2(x) + \frac{\nu(s)}{\omega(s)^{1+3\alpha}} \phi_3(x) - \frac{1}{\omega(s)^{\alpha}} T_t\{ \mathcal{N} [T_t^{-1} \{ \Psi_3(x,s) \}] + h(x,t) \}. 
\end{align}
Multiplying $\frac{\omega^{1+3\alpha}}{\nu(s)}$ to Eq. (\ref{ex-5a}) and applying limit as $\omega(s) \to \infty$ yield
\begin{align*} 
    \phi_3(x) &= \lim_{\omega(s) \to \infty} \left[ \frac{\omega(s)^{1+2\alpha}}{\nu(s)}  T_t\{ \mathcal{N} [T_t^{-1} \{ \Psi_3(x,s)\}\,]+ h(x,t)\} - \omega(s)^{2\alpha} \phi_1(x) - \omega(s)^{\alpha} \phi_2(x) \right] 
    \\
    &= \lim_{\omega(s) \to \infty} \left[ \frac{\omega(s)^{1+2\alpha}}{\nu(s)}  T_t\{ \mathcal{N} [T_t^{-1} \{ \Psi_3(x,s)\}\,] \} + \frac{\omega(s)^{1+2\alpha}}{\nu(s)}T_t\{ h(x,t)\} - \omega(s)^{2\alpha} \phi_1(x) - \omega(s)^{\alpha} \phi_2(x) \right].
\end{align*}
Substituting $\phi_1$ from Eq.(\ref{ex5-ph1}) and $\phi_2$ from Eq.(\ref{ex5-ph2}) into the above equation, to obtain
\begin{align} 
\label{ex5-ph3}
    \phi_3(x) &=  \lim_{\omega(s) \to \infty} \left[ \Big( \frac{\omega(s)^{1+2\alpha}}{\nu(s)}  T_t\{ \mathcal{N} [T_t^{-1} \{ \Psi_3(x,s)\}\,] \} -  \omega(s)^{2\alpha} \left(c\phi_0 - c\phi_0^2-\phi_0\phi'_0\right) \right. \nonumber
    \\ 
    & \qquad \qquad \qquad  - \omega(s)^{\alpha} \left( c\phi_1 - 2c\phi_0\phi_1 + (\phi_0\phi_1'+\phi_0'\phi_1) \right) \Big)
    \nonumber
    \\
    & \left.\qquad \qquad \qquad + \left(\frac{\omega(s)^{1+2\alpha}}{\nu(s)}T_t\{ h(x,t)\} -  \omega(s)^{\alpha-1} D_t^{\alpha} h(x,0)  -   \omega(s)^{2\alpha-1} h(x,0) \right)\right].
\end{align}
Based on the limit in Eq.(\ref{ex5-ph3}), let us observe these two limits:
\begin{align} \label{lim-11}
    \lim\limits_{\omega(s) \to \infty} \Big[ \frac{\omega(s)^{1+2\alpha}}{\nu(s)}  T_t\{ \mathcal{N} [T_t^{-1} \{ \Psi_3(x,s)\}\,] \} &-  \omega(s)^{2\alpha} \left(c\phi_0 - c\phi_0^2-\phi_0\phi'_0\right) \nonumber 
    \\
    &- \omega(s)^{\alpha} \left( c\phi_1 - 2c\phi_0\phi_1 + (\phi_0\phi_1'+\phi_0'\phi_1) \right) \Big],
\end{align}    
and
\begin{align} \label{lim-2}
    \lim\limits_{\omega(s) \to \infty} \left[\frac{\omega(s)^{1+2\alpha}}{\nu(s)}T_t\{ h(x,t)\} -  \omega(s)^{\alpha-1} D_t^{\alpha} h(x,0)  -   \omega(s)^{2\alpha-1} h(x,0)  \right].
\end{align}
Substituting $\Psi_3$ from Eq.(\ref{psii3}) into the limit (\ref{lim-11}) and denoting $ U =T_t^{-1} \left\{ \Psi_3(x,s) \right\}$, we then consider the term
\begin{align}
 T_t^{-1} \left\{ \Psi_3(x,s) \right\}&= T_t^{-1} \left\{ \frac{\nu(s)}{\omega(s)} \phi_0(x) + \frac{\nu(s)}{\omega(s)^{1+\alpha}} \phi_1(x) + \frac{\nu(s)}{\omega(s)^{1+2\alpha}} \phi_2(x) + \frac{\nu(s)}{\omega(s)^{1+3\alpha}} \phi_3(x) \right\}, \nonumber
    \\
 U &= \phi_0(x)+\frac{\phi_1(x) t^{\alpha}}{\Gamma(1+\alpha)} +\frac{\phi_2(x) t^{2\alpha}}{\Gamma(1+2\alpha)} +\frac{\phi_3(x) t^{3\alpha}}{\Gamma(1+3\alpha)}. \label{U}
\end{align} 
Based on $\mathcal{N}[u]$ provided in Eq.(\ref{ex4-nonli}) and the function $U$ in Eq.(\ref{U}), we observe the following term 
\begin{align*}
    T_t \left\{ \mathcal{N} \left[ T_t^{-1} \left\{ \Psi_3(x,s) \right\} \,\right]\, \right\} &= T_t \left\{ \mathcal{N}[U]\right\} 
    \\
    &=  T_t \left\{ cU-cU^2-UU_x \right\}
    \\
    &= c T_t \{U\} -c T_t\{U^2\} - T_t\{UU_x\}
    \\
    &= c T_t \left\{ \phi_0(x)+\frac{\phi_1(x) t^{\alpha}}{\Gamma(1+\alpha)} +\frac{\phi_2(x) t^{2\alpha}}{\Gamma(1+2\alpha)} +\frac{\phi_3(x) t^{3\alpha}}{\Gamma(1+3\alpha)} \right\}
    \\
    & - cT_t \left\{ \phi_0^2 +\left( \frac{\phi_1}{\Gamma(1+\alpha)}\right)^2 t^{2\alpha}  +\left( \frac{\phi_2}{\Gamma(1+2\alpha)}\right)^2 t^{4\alpha}  +\left( \frac{\phi_3}{\Gamma(1+3\alpha)}\right)^2 t^{6\alpha} \right. 
    \\
    &  \qquad \quad+ \frac{2\phi_0\phi_1t^{\alpha}}{\Gamma(1+\alpha)}  + \frac{2\phi_0\phi_2t^{2\alpha}}{\Gamma(1+2\alpha)}  + \frac{2\phi_0\phi_3t^{3\alpha}}{\Gamma(1+3\alpha)}  
    + \frac{2\phi_1\phi_2 t^{3\alpha}}{\Gamma(1+\alpha)\Gamma(1+2\alpha)}   
    \\
    & \qquad \quad + \frac{2\phi_1\phi_3 t^{4\alpha}}{\Gamma(1+\alpha)\Gamma(1+3\alpha)} + \left.\frac{2\phi_2\phi_3 t^{5\alpha}}{\Gamma(1+2\alpha)\Gamma(1+3\alpha)} \right\}
    \\
    &-  T_t \left\{ \phi_0\phi'_0 +  \frac{\phi_1\phi'_1}{(\Gamma(1+\alpha))^2} t^{2\alpha}  +  \frac{\phi_2\phi'_2} {(\Gamma(1+2\alpha))^2} t^{4\alpha}  + \frac{\phi_3 \phi'_3}{(\Gamma(1+3\alpha))^2}  t^{6\alpha} \right. 
    \\
    & \qquad \quad + \frac{(\phi_0\phi'_1+\phi'_0\phi_1)t^{\alpha}}{\Gamma(1+\alpha)}  + \frac{(\phi_0\phi'_2+\phi'_0\phi_2)t^{2\alpha}}{\Gamma(1+2\alpha)}  + \frac{(\phi_0\phi'_3+\phi'_0\phi_3)t^{3\alpha}}{\Gamma(1+3\alpha)}  
    \\
    & \qquad \quad \left. + \frac{(\phi_1\phi'_2+\phi'_1\phi_2) t^{3\alpha}}{\Gamma(1+\alpha)\Gamma(1+2\alpha)}   + \frac{(\phi_1\phi'_3+\phi'_1\phi_3) t^{4\alpha}}{\Gamma(1+\alpha)\Gamma(1+3\alpha)} +\frac{(\phi_2\phi'_3+\phi'_2\phi_3) t^{5\alpha}}{\Gamma(1+2\alpha)\Gamma(1+3\alpha)} \right\}. 
\end{align*}
Utilizing part (2) of Lemma \ref{lem-prelim}, we calculate the general transform with respect to $t$ of fractional power functions within the above equation. Subsequently, we multiply the result by $\frac{\omega(s)^{1+2\alpha}}{\nu(s)}$, to obtain 
\begin{align} \label{lim-1}
   \frac{\omega(s)^{1+2\alpha}}{\nu(s)} T_t \left\{ \mathcal{N} \left[ T_t^{-1} \left\{ \Psi_3(x,s) \right\} \,\right]\, \right\} 
    &= c \left[ \omega(s)^{2\alpha} \phi_0 + \omega(s)^{\alpha} \phi_1 +  \phi_2 + \frac{1}{\omega(s)^{\alpha}} \phi_3 \right] \nonumber
    \\
    & \quad- c \left[\phi_0^2 \omega(s)^{2\alpha} +\left( \frac{\phi_1}{\Gamma(1+\alpha)}\right)^2 \Gamma(1+2\alpha)  +\left( \frac{\phi_2}{\Gamma(1+2\alpha)}\right)^2 \frac{\Gamma(1+4\alpha)}{\omega(s)^{2\alpha}} \right. \nonumber
    \\
    & \qquad\quad  +\left( \frac{\phi_3}{\Gamma(1+3\alpha)}\right)^2 \frac{\Gamma(1+6\alpha)}{\omega(s)^{4\alpha}}  + 2\phi_0\phi_1 \omega(s)^{\alpha}  + 2\phi_0\phi_2  + \frac{2\phi_0\phi_3 }{\omega(s)^{\alpha}}  \nonumber
    \\
    & \qquad\quad +\frac{2\phi_1\phi_2\Gamma(1+3\alpha)}{\Gamma(1+\alpha)\Gamma(1+2\alpha) \omega(s)^{\alpha}}   +\frac{2\phi_1\phi_3 \Gamma(1+4\alpha)}{\Gamma(1+\alpha)\Gamma(1+3\alpha) \omega(s)^{2\alpha}}  \nonumber
    \\
    & \qquad\quad + \left.\frac{2\phi_2\phi_3 \Gamma(1+5\alpha)}{\Gamma(1+2\alpha)\Gamma(1+3\alpha) \omega(s)^{3\alpha}} \right]
    \nonumber
    \\
    &\quad - \left[ \phi_0\phi'_0 \omega(s)^{2\alpha}  +  \frac{\phi_1\phi'_1 \Gamma(1+2\alpha)}{(\Gamma(1+\alpha))^2 }   +  \frac{\phi_2\phi'_2 \Gamma(1+4\alpha)}{(\Gamma(1+2\alpha))^2 \omega(s)^{2\alpha}}      \right. \nonumber
    \\
    & \qquad \quad + \frac{\phi_3 \phi'_3 \Gamma(1+6\alpha)}{(\Gamma(1+3\alpha))^2 \omega(s)^{4\alpha}} + (\phi_0\phi'_1+\phi'_0\phi_1)\omega(s)^{\alpha}  + (\phi_0\phi'_2+\phi'_0\phi_2)  \nonumber
    \\
    & \qquad \quad + \frac{(\phi_0\phi'_3+\phi'_0\phi_3)}{\omega(s)^{\alpha}} + \frac{(\phi_1\phi'_2+\phi'_1\phi_2) \Gamma(1+3\alpha) }{\Gamma(1+\alpha)\Gamma(1+2\alpha)\omega(s)^{\alpha}}  \nonumber
    \\
    & \qquad \quad \left. + \frac{(\phi_1\phi'_3+\phi'_1\phi_3) \Gamma(1+4\alpha)}{\Gamma(1+\alpha)\Gamma(1+3\alpha) \omega(s)^{2\alpha}} +\frac{(\phi_2\phi'_3+\phi'_2\phi_3) \Gamma(1+5\alpha)}{\Gamma(1+2\alpha)\Gamma(1+3\alpha)\omega(s)^{3\alpha}} \right].
\end{align} 
Substituting the result obtained in Eq.(\ref{lim-1}) into the limit (\ref{lim-11}), we find
\begin{align} \label{lim-101}
    \lim\limits_{\omega(s) \to \infty} \Big[ &\frac{\omega(s)^{1+2\alpha}}{\nu(s)}  T_t\{ \mathcal{N} [T_t^{-1} \{ \Psi_3(x,s)\}\,] \} -  \omega(s)^{2\alpha} \left(c\phi_0 - c\phi_0^2-\phi_0\phi'_0\right) \nonumber 
    \\
    &- \omega(s)^{\alpha} \left( c\phi_1 - 2c\phi_0\phi_1 + (\phi_0\phi_1'+\phi_0'\phi_1) \right) \Big] \nonumber
    \\
    &=  c\phi_2 -c\left( \frac{\phi_1}{\Gamma(1+\alpha)}\right)^2 \Gamma(1+2\alpha) -2c\phi_0\phi_2 -\frac{\phi_1\phi'_1 \Gamma(1+2\alpha)}{(\Gamma(1+\alpha))^2 }  - (\phi_0\phi'_2+\phi'_0\phi_2)
\end{align}  
In the next step, we evaluate the limit (\ref{lim-2}) by applying parts (3) and (4) of Lemma \ref{lem-prelim} to the non-homogeneous term as shown below.
\begin{align} \label{lim-22}
    \lim\limits_{\omega(s) \to \infty} \Bigg[ &\frac{\omega(s)^{1+2\alpha}}{\nu(s)}T_t\{ h(x,t)\} -   \omega(s)^{2\alpha-1} h(x,0) -  \omega(s)^{\alpha-1} D_t^{\alpha} h(x,0) \Bigg] \nonumber
    \\
    &=\lim_{\omega(s) \to \infty}  \frac{\omega(s)}{\nu(s)}\left[ \omega(s)^{2\alpha} T_t\{ h(x,t)\} - \nu(s) \omega(s)^{2\alpha-1} h(x,0) - \nu(s)\omega(s)^{\alpha-1} D_t^{\alpha} h(x,0)  \right]  \nonumber
    \\
    &= D_t^{2\alpha} h(x,0). 
\end{align}
Upon substituting the established limits (\ref{lim-101}) and (\ref{lim-22}) into Eq.(\ref{ex5-ph3}), the third coefficient determined by the direct method is derived as follows
\begin{align*}
    \phi_3(x) &= c\phi_2 -c\left( \frac{\phi_1}{\Gamma(1+\alpha)}\right)^2 \Gamma(1+2\alpha) -2c\phi_0\phi_2 -\frac{\phi_1\phi'_1 \Gamma(1+2\alpha)}{(\Gamma(1+\alpha))^2 }  - (\phi_0\phi'_2+\phi'_0\phi_2)
      +  D_t^{2\alpha} h(x,0),
\end{align*}
which is the same as the coefficient obtained from our coefficient formula in Eq. (\ref{ourph3}). While the previous study \cite{robust2024} omitted details regarding the potentially lengthy and cumbersome direct method, our formula offers a streamlined and efficient alternative.

For the remaining coefficients $\phi_k(x), k=4,5,\dots$, we 
use the coefficients formula (\ref{formu}) to evaluate them:
\begin{align}\label{eq-4k}
    \phi_k(x) &= D_t^{(k-1)\alpha} \left( \mathcal{N}\left[ \underbrace{ \sum_{n=0}^{k-1} \frac{\phi_n(x) t^{n\alpha}}{\Gamma(1+n\alpha)} }_{=:\Theta_{k-1}} \right] + h(x,t)\,  \right)\, \Bigg|_{t=0} \nonumber
    \\
  &= D_t^{(k-1)\alpha} \left(   c\Theta_{k-1}(1 -\Theta_{k-1}) - \Theta_{k-1}[\Theta_{k-1}]_x \right) \Big|_{t=0} + D_t^{(k-1)\alpha} h(x,t)\Big|_{t=0},
\end{align}
To compute the following terms, we refer to Eqs. (\ref{eq-3-5a}) and (\ref{eq-3-5b}) in Example \ref{ex-bio} 
\begin{align} \label{eq-4-5a}
D_t^{(k-1)\alpha} \left( c\Theta_{k-1}(1 -\Theta_{k-1})\right)\Big|_{t=0} = c\phi_{k-1}(x) - c \sum_{m=0}^{k-1} \frac{\Gamma(1+(k-1)\alpha)}{\Gamma(1+m\alpha)\Gamma(1+(k-1-m)\alpha))}\phi_m\phi_{k-1-m} 
\end{align}
and
\begin{align} \label{eq-4-5b}
    D_t^{(k-1)\alpha}( \Theta_{k-1}[\Theta_{k-1}]_x )\Big|_{t=0} 
    &=  \sum_{m=0}^{k-1} \frac{\Gamma(1+(k-1)\alpha)}{\Gamma(1+m\alpha)\Gamma(1+(k-1-m)\alpha))}\phi_m\phi'_{k-1-m}. 
\end{align}
Substituting Eqs.(\ref{eq-4-5a}) and (\ref{eq-4-5b}) into Eq. (\ref{eq-4k}), we obtain  
\begin{align}\label{eq-35}
    \phi_k &=  c\phi_{k-1} - \sum_{m=0}^{k-1} \frac{\Gamma(1+(k-1)\alpha)}{\Gamma(1+m\alpha)\Gamma(1+(k-1-m)\alpha))} \left(  c\phi_m\phi_{k-1-m}  + \phi_m\phi'_{k-1-m} \right) + D_t^{(k-1)\alpha} h(x,t)\Big|_{t=0},
\end{align}
for $k=4,5,\dots$. In the following, we present the coefficients derived from the formula (\ref{eq-35}):
\begin{align*}
    \phi_4 &= c\phi_3 - 2c\phi_0\phi_3- (2c\phi_1\phi_2+\phi_1\phi'_2+\phi'_1\phi_2) \frac{\Gamma(1+3\alpha)}{\Gamma(1+\alpha)\Gamma(1+2\alpha)} - (\phi_0\phi'_3+\phi'_0\phi_3) + D_t^{3\alpha} h(x,t)\Big|_{t=0}, 
    \\
    \phi_5 &= c\phi_4 - 2c\phi_0\phi_4- (2c\phi_1\phi_3+\phi_1\phi'_3+\phi'_1\phi_3) \frac{\Gamma(1+4\alpha)}{\Gamma(1+\alpha)\Gamma(1+3\alpha)} - (c\phi_2^2+\phi_2\phi'_2)\frac{\Gamma(1+4\alpha)}{(\Gamma(1+2\alpha))^2} 
    \nonumber \\
    &\quad -  
    (\phi_0\phi'_4+\phi'_0\phi_4)+ D_t^{4\alpha} h(x,t)\Big|_{t=0},  
    \\
    \phi_6 &= c\phi_5 - 2c\phi_0\phi_5 -(2c\phi_1\phi_4+\phi_1\phi'_4 + \phi'_1\phi_4)\frac{\Gamma(1+5\alpha)}{\Gamma(1+\alpha)\Gamma(1+4\alpha)}  
    \nonumber\\
    &\quad - (2c\phi_2\phi_3+\phi_2\phi'_3 + \phi'_2\phi_3)\frac{\Gamma(1+5\alpha)}{\Gamma(1+2\alpha)\Gamma(1+3\alpha)}
    (\phi_0\phi'_5+\phi'_0\phi_5)+ D_t^{5\alpha} h(x,t)\Big|_{t=0},    
\end{align*}

The coefficient formula we propose generates results that different from those presented in \cite{robust2024}. It is noteworthy that the coefficients obtained from the direct method are identical to those derived from our proposed formula. Moreover,   this formula for coefficients offers a significant reduction in computational complexity, thereby streamlining the analytical process. 

Furthermore, we will utilize coefficients formula (\ref{eq-35}) to find the solution for the gas dynamics equation (\ref{eq-30}) with the following choices of $c$, $u(x,0)$ and $h(x,t)$:
\begin{itemize}
    \item $c=1, u(x,0)=\phi_0=e^{-x}$ and $h(x,t)=0$, the coefficients are $\phi_i(x) = e^{-x},\, i =0,1,2,\dots$, we have the final solution
    \[ u(x,t) = e^{-x} \sum_{n=0}^\infty \frac{t^{n\alpha}}{\Gamma(1+n\alpha)}. \]
   \item $c=\ln b, u(x,0)=\phi_0=b^{-x}$ and $h(x,t)=0$, the coefficients are $\phi_i(x) = b^{-x}(\ln b)^i,\, i =0,1,2,\dots$. The obtained series solution is
    \[ u(x,t) = b^{-x} \sum_{n=0}^{\infty} \frac{(\ln b)^n\,t^{n\alpha}}{\Gamma(1+n\alpha)}. \]   
   \item $c=1, u(x,0)=\phi_0=1-e^{-x}$ and $h(x,t)=-e^{-x+t}$, the coefficients are $\phi_i(x) = e^{-x},\, i =0,1,2,\dots$, we have the final solution
    \[ u(x,t) = 1-e^{-x} \sum_{n=0}^\infty \frac{t^{n\alpha}}{\Gamma(1+n\alpha)}. \]
\end{itemize}
\end{ex}

\section{Conclusion}
In conclusion, this paper has presented a novel approach to address the limitations associated with traditional RPS methods for solving time-fractional differential equations. Despite their success, these methods present significant challenges. First, calculating the coefficients for series solutions can be computationally expensive. Second, the consistency of these methods across different Laplace-like transforms remains an open question.

Our proposed method overcomes these limitations by introducing the GRPS method along with an explicit formula for coefficient calculation. This formula eliminates the need for repetitive calculations, streamlining the solution process.  Furthermore, the coefficients formula in the GRPS method offers a universally applicable approach, compatible with various RPS methods and the Laplace-like transform variants.

The paper demonstrated the effectiveness of our method through illustrative examples, showcasing its ability to efficiently and consistently solve time-fractional differential equations. This paves the way for further exploration of this approach in solving a wider range of problems involving fractional calculus.



\begin{thebibliography}{10}
\expandafter\ifx\csname url\endcsname\relax
  \def\url#1{\texttt{#1}}\fi
\expandafter\ifx\csname urlprefix\endcsname\relax\def\urlprefix{URL }\fi
\expandafter\ifx\csname href\endcsname\relax
  \def\href#1#2{#2} \def\path#1{#1}\fi

\bibitem{rpsm2014}
M.~Alquran, Analytical solutions of fractional foam drainage equation by
  residual power series method, Math Sci 8~(1) (2014) 153–160.

\bibitem{lpsm2020New}
T.~Eriqat, A.~El-Ajou, M.~Oqielat, Z.~Al-Zhour, S.~Momani, A new attractive
  analytic approach for solutions of linear and nonlinear neutral fractional
  pantograph equations, Chaos, Solitons and Fractals 138~(1) (2020) 1--11.

\bibitem{gtran2021}
H.~Jafari, A new general integral transform for solving integral equations,
  Journal of Advanced Research 32 (2021) 133--138.

\bibitem{elzrps-2021}
J.~Zhang, X.~Chen, L.~Li, C.~Zhou, Elzaki transform residual power series
  method for the fractional population diffusion equations, Engineering Letters
  29~(4) (2021) 1561--1572.

\bibitem{elzrps-2021Hi}
A.~Khan, M.~I. Liaqat, M.~Younis, A.~Alam, Approximate and exact solutions to
  fractional order cauchy reaction-diffusion equations by new combine
  techniques, Journal of Mathematics 2021 (2021) 1--12.

\bibitem{elzrps-2023}
N.~Iqbal, M.~T. Chughtai, R.~Ullah, Fractional study of the non-linear
  burgers’ equations via a semi-analytical technique, Fractal and Fractional
  7~(103) (2023) 1--17.

\bibitem{elzrps-2024N}
M.~Nadeem, Z.~Li, D.~Kumar, Y.~Alsayaad, A robust approach for computing
  solutions of fractional-order two-dimensional helmholtz equation, Scientific
  Reports volume 14~(4152) (2024) 1--13.

\bibitem{elzrps-2024}
M.~I. Liaqat, A.~Akguel, M.~Bayram, Series and closed form solution of caputo
  time-fractional wave and heat problems with the variable coefficients by a
  novel approach, Optical and Quantum Electronics 56~(203) (2024) 1--35.

\bibitem{abrps2022}
M.~I. Liaqat, S.~Etemad, S.~Rezapour, C.~Park, A novel analytical aboodh
  residual power series method for solving linear and nonlinear time-fractional
  partial differential equations with variable coefficients, Mathematics 7~(9)
  (2022) 16917--16948.

\bibitem{BSabrps2023}
M.~I. Liaqat, A.~Akgül, H.~Abu-Zinadah, Analytical investigation of some
  time-fractional black–scholes models by the aboodh residual power series
  method, Mathematics 11~(2) (2023) 1--19.

\bibitem{abrps2024}
A.~S. Alshehry, H.~Yasmin, R.~Shah, A.~Ali, I.~Khan, Fractional-order view
  analysis of fisher’s and foam drainage equations within aboodh transform,
  Engineering Computations (2024).
\newblock \href {https://doi.org/https://doi.org/10.1108/EC-08-2023-0475}
  {\path{doi:https://doi.org/10.1108/EC-08-2023-0475}}.

\bibitem{abrps2024KDV}
Y.~Jawarneh, Z.~Alsheekhhussain, M.~M. Al-Sawalha, Fractional view analysis
  system of korteweg–de vries equations using an analytical method, Fractal
  and Fractional 8~(40) (2024) 1--33.

\bibitem{sumurps2022}
V.~P. Dubey, J.~Singh, A.~M. Alshehri, S.~Dubey, D.~Kumar, Forecasting the
  behavior of fractional order bloch equations appearing in nmr flow via a
  hybrid computational technique, Chaos, Solitons and Fractals 164 (2022)
  112691.

\bibitem{robust2024}
S.~K. Khirsariya, J.~P. Chauhan, S.~B. Rao, A robust computational analysis of
  residual power series involving general transform to solve fractional
  differential equations, Mathematics and Computers in Simulation 216 (2024)
  168*186.

\bibitem{book1}
K.~Oldham, J.~Spanier, The Fractional Calculus: Theory and Applications of
  Differentiation and Integration to Arbitrary Oders, Academic Press, 1974.

\bibitem{book2}
K.~Miller, B.~Ross, An Introduction to Fractional Calculus and Fractional
  Differential Equations, Wiley, 1993.

\bibitem{book3}
J.~Podlubny, Fractional Differential Equations, Academic Press, 1999.

\bibitem{book4}
A.~Kilbas, H.~Srivastava, J.~Trujillo, Theory and Applications of Fractional
  Differential Equations, Elsevier, 2006.

\bibitem{BSrpsm2019}
Y.~Zhang, A.~Kumar, S.~Kumar, D.~Baleanu, X.~J. Yang, Residual power series
  method for time-fractional schr{\o}dinger equations, J. Nonlinear Sci. Appl
  9~(11) (2016) 5821--5829.

\bibitem{lrpsm2021soliton}
A.~El-Ajou, Adapting the laplace transform to create solitary solutions for the
  nonlinear time-fractional dispersive pdes via a new approach, Eur. Phys. J.
  Plus 136~(229) (2021) 1--15.

\end{thebibliography}

\end{document}